 \renewcommand{\epsilon}{\varepsilon}
\newtheorem{theorem}{Theorem}[section]
 \newtheorem{lemma}[theorem]{Lemma}
 \newtheorem{Corollary}[theorem]{Corollary}
 \newtheorem{proposition}[theorem]{proposition}
 \newtheorem{Proposition}[theorem]{Proposition}
\newtheorem{deff}[theorem]{Definition}
 \newtheorem{rem}[theorem]{Remark}
 \newcommand{\bth}{\begin{theorem}}
 \newcommand{\ble}{\begin{lemma}}
 \newcommand{\bcor}{\begin{corr}}
 \newcommand{\bdeff}{\begin{deff}}
 \newcommand{\bprop}{\begin{proposition}}
 \newcommand{\ele}{\end{lemma}}
 \newcommand{\ecor}{\end{corr}}
 \newcommand{\edeff}{\end{deff}}
 \newcommand{\eprop}{\end{proposition}}
 \renewcommand{\Pi}{\varPi}
 \renewcommand{\epsilon}{\varepsilon}
\numberwithin{equation}{section}
\title
[Nonlinear second boundary conditions ]{On the second boundary value problem for Lagrangian mean curvature flow}
\author{Rongli Huang}
\address{School of Mathematics and Statistics, Guangxi Normal University,
Guilin, Guangxi 541004, People's Republic of China,
 E-mail: ronglihuangmath@gxnu.edu.cn}
\date{}
\begin{document}
\maketitle

\begin{abstract}
We consider a fully nonlinear parabolic equation with nonlinear Neumann type boundary condition, and
show the long time existence and convergence of the flow. Finally we apply this study to the boundary value problem
for minimal Lagrangian graphs.

\end{abstract}

\let\thefootnote\relax\footnote{
2010 \textit{Mathematics Subject Classification}. Primary 53C44; Secondary 53A10.

\textit{Keywords and phrases}. Lagrangian mean curvature flow; G$\hat{a}$teaux derivative; Hopf lemma.}

\section{Introduction}
Since the work of R.P.Thomas and S.T.Yau
\cite{RY}   about mean curvature flow of Lagrangian submainfolds of Calabi-Yau manifolds,
Lagrangian mean curvature flow has been studying by many authors. K. Smoczyk and M.T. Wang  obtained the long time existence and convergence of  Lagrangian mean curvature flow
in some conditions  (cf.\cite{K1}, \cite{KM}). The progress on singularity of Lagrangian mean curvature flow made people
have a deeper understanding to Thomas-Yau Conjectures such as J.Y. Chen and J.Y. Li \cite{CL}, A. Neves \cite{A1} \cite{A2}.
Recently several authors took the equation point of view to study  Lagrangian mean curvature flow
such as \cite{ACH1}, \cite{ACY1}. \par
Let  $\Omega$, $\tilde{\Omega}$  be strict convex bounded  domains with smooth boundary in $\mathbb{R}^{n}$.
In  special Lagrangian geometry,
S. Brendle and M. Warren \cite{SM} used the method of continuity to prove that there exists a diffeomorphism f: $\Omega\rightarrow\tilde{\Omega}$ such that
the graph $$\Sigma=\{(x,f(x))|x\in \Omega\}$$
is a minimal Lagrangian submanifold of $\mathbb{R}^{n}\times\mathbb{R}^{n}$. The aim of this paper is to provide a parabolic approach to  Brendle-Warren' theorem.

Firstly we introduce some relevant works according to solving elliptic equations with second boundary conditions by parabolic approach. To solve an optimal transportation,  J. Kitagawa\cite{JK} looked for solutions on the following
set of boundary value problems:
\begin{equation*}\label{e1.6}
\left\{ \begin{aligned}\frac{\partial u}{\partial t}-\ln\det (D^{2}u-A(x, Du))&=-\ln B(x,Du)
& t>0,\quad x\in \Omega, \\
Du(\Omega)&=\tilde{\Omega}, &t>0,\qquad\qquad\\
 u&=u_{0}, & t=0,\quad x\in \Omega.
\end{aligned} \right.
\end{equation*}
Here $A$ is a matrix value function and $B$ a scalar value function defined on the cost function and
two measures related to the transportation. Under certain conditions on $\Omega, \tilde{\Omega}$, $A$, $B$ and the initial
function, he proved the long time existence to the above flow, and convergence to the solution of the optimal transport
problem as $t\rightarrow +\infty$.
In\cite{OK}, Neumann and second boundary value problems for Hessian and  Gauss curvature flows were carefully studied by O.C. Schnurer and K. Smoczyk.
They showed that the flow exists for all time and converges eventually to the solution of the
prescribed Gauss curvature equation.

Inspired from \cite{JK} and \cite{OK},
 we consider the following Lagrangian mean curvature flow with boundary conditions
\begin{equation}\label{e1.1}
\left\{ \begin{aligned}\frac{\partial u}{\partial t}&=\Sigma_{i=1}^{n}\arctan\lambda_{i},
& t>0,\quad x\in \Omega, \\
Du(\Omega)&=\tilde{\Omega}, &t>0,\qquad\qquad\\
 u&=u_{0}, & t=0,\quad x\in \Omega.
\end{aligned} \right.
\end{equation}
where
$$\lambda_{1}\leq\lambda_{2}\leq\cdots\leq\lambda_{n}$$
are the eigenvalues of $D^{2}u=[u_{ij}]$,  and $Du(\cdot,t)$ is
a family of diffeomorphisms from $\Omega$ to $\tilde{\Omega}$.
Along the lines of approach in  a work by O.C. Schnurer and K. Smoczyk \cite{OK}, our main results concern the long time existence and convergence of the nonlinear parabolic flow (\ref{e1.1}) and then obtain the solution to the boundary value problem for minimal lagrangian graphs \cite{SM}. Now we can state our main theorem.

\begin{theorem}\label{t1.1}
Assume that $\Omega$, $\tilde{\Omega}$ are bounded, strictly convex domains with smooth boundary in $\mathbb{R}^{n}$ and $0<\alpha<1$.
  Then for any given initial function $u_{0}\in C^{2+\alpha}(\bar{\Omega})$
  which is   strictly convex and satisfies $Du_{0}(\Omega)=\tilde{\Omega}$,  the  strictly convex solution of (\ref{e1.1}) exists
  for all $t\geq 0$ and $u(\cdot,t)$ converges to a function $u^{\infty}$ in $C^{1+\zeta}(\bar{\Omega})\cap C^{\infty}(\bar{D})$ as $t\rightarrow\infty$
  for any $D\subset\subset\Omega$, $\zeta<1$,
  and $u^{\infty}\in C^{1+1}(\bar{\Omega})\cap C^{\infty}(\Omega)$ is a solution of
\begin{equation}\label{e1.2}
\left\{ \begin{aligned}\Sigma_{i=1}^{n}\arctan\lambda_{i}&=c,
&  x\in \Omega, \\
Du(\Omega)&=\tilde{\Omega}.
\end{aligned} \right.
\end{equation}
The constant $c$ depends only on $\Omega$, $\tilde{\Omega}$ and $u_{0}$.
\end{theorem}
\begin{rem}
By the methods in \cite{SM}, the initial function $u_{0}$  can be obtained by considering
\begin{equation*}\label{e}
\left\{ \begin{aligned}\triangle u&=c,
&  x\in \Omega, \\
Du(\Omega)&=\tilde{\Omega}.
\end{aligned} \right.
\end{equation*}
Here the goal is easier to attack because Laplace equation is simpler than special Lagrangian equation.
\end{rem}
\begin{rem}
S. Brendle and M. Warren \cite{SM} showed that the solution to (\ref{e1.2}) was unique up to addition of constants.
\end{rem}

It's well known that (\ref{e1.2}) is special Lagrangian eqaution with  second boundary condition where the solution $(x,Du)$ is a minimal Lagrangian graph in $\mathbb{R}^{n}\times\mathbb{R}^{n}$.
Using the method of solving fully nonlinear elliptic equations with second boundary conditions, S. Brendle and M. Warren \cite{SM} obtained the existence and uniqueness of the solution to (\ref{e1.2}). As a consequence of Theorem \ref{t1.1}, we proved the existence result of the minimal Lagrangian submanifolds with the same conditions in $\mathbb{R}^{n}\times\mathbb{R}^{n}$.

The plan is as follows for the proof of Theorem 1.1. In Section 2, we establish the local existence result to the flow (\ref{e1.1}) by
the inverse function theory. In Section 3, we provide preliminary results which will be used in the proof of the theorem. The techniques used in this section are reflective of those in \cite{JU} and \cite{OK} to the second boundary value problems for fully nonlinear differential equations, but all of the corresponding a priori estimates to the solution in the current scenario
need modification because the structure of (\ref{e1.1})  is unlike Monge-Amp$\grave{e}$re type.  In Section 4, we give the proof of our main results.

\section{ the short-time existence of the
parabolic flow }

 Throughout the following Einstein's convention of
 summation over repeated indices will be adopted.
Denote $$u_{i}=\dfrac{\partial u}{\partial x_{i}},
u_{ij}=\dfrac{\partial^{2}u}{\partial x_{i}\partial x_{j}},
u_{ijk}=\dfrac{\partial^{3}u}{\partial x_{i}\partial x_{j}\partial
x_{k}}, \cdots ,$$ and
$$[u^{ij}]=[ u_{ij}]^{-1},\,\,\,F(D^{2}u)=\Sigma_{i=1}^{n}\arctan\lambda_{i},\,\,\,
F^{ij}(D^{2}u)=\frac{\partial F}{\partial u_{ij}},\,\,\, \Omega_{T}=\Omega\times(0,T). $$
By the methods on the second boundary value problems for equations of Monge-Amp\`{e}re type \cite{JU},
the parabolic boundary condition in (\ref{e1.1}) can be reformulated as
$$h(Du)=0,\qquad x\in \partial\Omega,\quad t>0,$$
where $h$ is a smooth function on $\bar{\tilde{\Omega}}$:
$$\tilde{\Omega}=\{p\in\mathbb{R}^{n} |h(p)>0\},\qquad |Dh|_{{\partial\tilde{\Omega}}}=1.$$
The so called boundary defining function is strictly concave, i.e, $\exists \theta>0$,
$$ \frac{\partial^{2}h}{\partial y_{i}\partial y_{j}}\xi_{i}\xi_{j}\leq
-\theta|\xi|^{2},\qquad \mathbf{for}\quad \forall y=(y_{1}, y_{2},\cdots, y_{n})\in \tilde{\Omega},\quad
 \xi=(\xi_{1}, \xi_{2},\cdots, \xi_{n})\in \mathbb{R}^{n}.$$
We also give the boundary defining function according to $\Omega$ (cf.\cite{SM}:
$$\Omega=\{p\in\mathbb{R}^{n} |\tilde{h}(p)>0\},\qquad |D\tilde{h}|_{{\partial\Omega}}=1,$$
$$\exists\tilde{\theta}>0,\quad\frac{\partial^{2}\tilde{h}}{\partial y_{i}\partial y_{j}}\xi_{i}\xi_{j}\leq
-\tilde{\theta}|\xi|^{2},\quad \mathbf{for}\quad \forall y=(y_{1}, y_{2},\cdots, y_{n})\in \tilde{\Omega},\quad
 \xi=(\xi_{1}, \xi_{2},\cdots, \xi_{n})\in \mathbb{R}^{n}.$$
Thus the parabolic flow is equivalent to the evolution problem:
\begin{equation}\label{e1.3}
\left\{ \begin{aligned}\frac{\partial u}{\partial t}&=\Sigma_{i=1}^{n}\arctan\lambda_{i},
& t>0,\quad x\in \Omega, \\
h(Du)&=0,& \qquad t>0,\quad x\in\partial\Omega,\\
 u&=u_{0}, & \qquad\quad t=0,\quad x\in \Omega.
\end{aligned} \right.
\end{equation}
To obtain the short-time existence of classical solution of (\ref{e1.3}) we use the inverse function theorem
in Fr$\acute{e}$chet spaces and the theory of linear parabolic equations for oblique boundary conditions.
\begin{lemma}[\cite{IE}, Theorem 2]\label{l1.1}
Let $X$ and $Y$ be Banach spaces. Denote $$J: X\rightarrow Y$$
 be continuous and G$\hat{a}$teaux-differentiable, with
$J(v_{0})=w_{0}$. Assume that the derivative $DJ[v]$ has a right inverse $L[v]$, uniformly bounded in
a neighbourhood of $v_{0}$:
$$\forall \alpha\in Y,\quad DJ[v]L[v]\alpha=\alpha$$
$$\|v-v_{0}\|\leq R\Longrightarrow \|L[v]-L[v_{0}]\|\leq m.$$
For every $w\in Y$  if
$$\parallel w-w_{0}\parallel<\frac{R}{m}$$
then there is some $v$ such that we have:
$$\|v-v_{0}\|< R,$$
and
$$J(v)=w.$$
\end{lemma}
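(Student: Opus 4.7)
The plan is to establish surjectivity of $J$ onto the open ball of radius $R/m$ around $w_0$ via Ekeland's variational principle. Given a target $w$ with $\|w - w_0\|_Y < R/m$, I introduce the continuous, non-negative functional
\[
\varphi(v) := \|J(v) - w\|_Y
\]
on the complete metric space $B := \overline{B(v_0,R)} \subset X$, and aim to show $\inf_B \varphi = 0$ is attained at some interior point.

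The first step is to fix a radius $\lambda \in \bigl(m\,\varphi(v_0),\, R\bigr)$, which is nonempty precisely because $\varphi(v_0) = \|w_0 - w\|_Y < R/m$. Applying Ekeland's principle with base point $v_0$, perturbation parameter $\varphi(v_0)$, and radius $\lambda$ yields a point $v_* \in B$ with $\|v_* - v_0\|_X \le \lambda < R$ and
\[
\varphi(v_*) \le \varphi(v) + \frac{\varphi(v_0)}{\lambda}\,\|v - v_*\|_X \qquad \text{for all } v \in B.
\]

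The second step is to argue by contradiction that $J(v_*) = w$. Suppose not; set $\alpha := (J(v_*) - w)/\|J(v_*) - w\|_Y \in Y$ and consider the perturbation $v_s := v_* - s\,L[v_*]\alpha$ for small $s > 0$. Since $L$ is uniformly controlled on the ball (the stated bound on $\|L[v]-L[v_0]\|$ yields a uniform operator bound $\lesssim m$), one has $v_s \in B$ for $s$ small and $\|v_s - v_*\|_X \le sm$. The right-inverse identity $DJ[v_*]L[v_*]\alpha = \alpha$ together with G\^ateaux-differentiability of $J$ at $v_*$ in the direction $L[v_*]\alpha$ gives
\[
J(v_s) = J(v_*) - s\alpha + o(s), \qquad \varphi(v_s) = \varphi(v_*) - s + o(s).
\]
Testing the Ekeland inequality at $v = v_s$, dividing by $s$, and letting $s \downarrow 0$ forces $1 \le m\,\varphi(v_0)/\lambda$, which contradicts the choice $\lambda > m\,\varphi(v_0)$. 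Hence $\varphi(v_*) = 0$, i.e.\ $J(v_*) = w$, and $\|v_* - v_0\|_X < R$.

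The principal difficulty is the parameter balancing between the Ekeland radius $\lambda$ and the operator-norm bound $m$ on the right inverse: $\lambda$ must be large enough that $v_*$ lies strictly inside $B(v_0,R)$ (so that $L[v_*]$ and its bound are defined), yet small enough that the Ekeland decay rate $\varphi(v_0)/\lambda$ strictly exceeds the reciprocal gain $1/m$ obtained along $L[v_*]\alpha$. The sharp threshold $\|w - w_0\|_Y < R/m$ is exactly what opens up the admissible interval $\bigl(m\,\varphi(v_0),\, R\bigr)$. A subsidiary point is that only G\^ateaux-differentiability of $J$ is used, and only in the one distinguished direction $L[v_*]\alpha$, so no Fr\'echet expansion is required; this is precisely what makes the result useful in the Fr\'echet-space applications envisioned later in the paper.
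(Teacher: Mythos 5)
The paper does not prove this lemma; it imports it verbatim from Ekeland \cite{IE}, Theorem~2, so there is no internal proof to compare against. Your argument via Ekeland's variational principle is precisely the proof in that reference: fix a target $w$, approximately minimize $\varphi(\cdot)=\|J(\cdot)-w\|_Y$ over $\overline{B(v_0,R)}$ using the variational principle with rate $\varphi(v_0)/\lambda$ for $\lambda\in(m\varphi(v_0),R)$, and rule out $\varphi(v_*)>0$ by descending along $-L[v_*]\alpha$ using the right-inverse identity and the one-sided G\^ateaux expansion $J(v_s)=J(v_*)-s\alpha+o(s)$. The bookkeeping (choice of $\lambda$, the division by $s$, the conclusion $\|v_*-v_0\|\le\lambda<R$) is correct. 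One small but genuine gap: the step ``$\|v_s-v_*\|_X\le sm$'' requires $\|L[v_*]\|\le m$, whereas the hypothesis as printed only gives $\|L[v_*]-L[v_0]\|\le m$, hence $\|L[v_*]\|\le\|L[v_0]\|+m$; your ``$\lesssim m$'' glosses over this, and with the literal hypothesis your argument produces the threshold $R/(\|L[v_0]\|+m)$ rather than $R/m$. This is almost certainly a slip in the paper's restatement of Ekeland's theorem (whose actual hypothesis bounds $\|L(v)\|$ uniformly by $m$); if you intend the proof to be self-contained you should either adopt $\|L[v]\|\le m$ as the standing hypothesis or carry the constant $\|L[v_0]\|+m$ through to the conclusion.
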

\begin{lemma}[\cite{GM}, Theorem 8.8 and 8.9]\label{l1.2}
Assume that $f\in C^{\alpha,\frac{\alpha}{2}}(\bar{\Omega}_{T})$  for some $0<\alpha<1$, $T>0$, and
$G(x,p)$,$G_{p}(x,p)$ are in $C^{1+\alpha}(\Sigma)$ for any compact subset $\Sigma$ of $\partial\Omega\times\mathbb{R}^{n}$
such that $\inf_{\partial\Omega}\langle G_{p}, \nu\rangle>0$
where $\nu$ is the inner normal vector of $\partial\Omega$. Let $u_{0}\in C^{2+\alpha}(\bar{\Omega})$ be strictly convex and satisfies
$G(x, Du_{0})=0.$ Then there exists  $T_{max}>0$ such that we can find an unique solution which is strictly convex in $x$ variable in the class $C^{2+\alpha,1+\frac{\alpha}{2}}(\bar{\Omega}_{T_{max}})$
to the following equations
\begin{equation*}\label{e1.4}
\left\{ \begin{aligned}\frac{\partial u}{\partial t}-\triangle u&=f(x,t),
& T>t>0,\quad x\in \Omega, \\
G(x,Du)&=0,& \qquad T>t>0,\quad x\in\partial\Omega.\\
 u&=u_{0}, & \qquad\quad t=0,\quad x\in \Omega.
\end{aligned} \right.
\end{equation*}
\end{lemma}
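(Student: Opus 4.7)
The plan is to deduce Lemma \ref{l1.2} from the classical Schauder theory for linear parabolic equations with oblique boundary conditions, combined with a contraction-mapping argument that absorbs the nonlinearity of $G$. First I would linearize the boundary condition about the initial datum: writing $\beta(x,p):=G_p(x,p)$, the assumption $\inf_{\partial\Omega}\langle\beta,\nu\rangle>0$ is precisely the strict obliqueness required by the linear theory. Since $G(x,Du_0)=0$, a first-order Taylor expansion of $G$ about $Dv$ leads naturally to the linearized oblique condition
\[
\beta(x,Dv)\cdot Du \;=\; \beta(x,Dv)\cdot Dv - G(x,Dv),
\]
and for $v$ close to $u_0$ in $C^{2+\alpha,1+\alpha/2}(\bar\Omega_T)$ the coefficient vector $\beta(x,Dv)$ remains strictly oblique on $\partial\Omega$.

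Next I would set up, on a small closed ball $\mathcal{B}_R\subset C^{2+\alpha,1+\alpha/2}(\bar\Omega_T)$ centered at $u_0$, the map $\Phi:v\mapsto u$, where $u$ is the unique solution of the linear problem
\[
\left\{
\begin{aligned}
\partial_t u-\Delta u &= f(x,t), && \text{in }\Omega_T,\\
\beta(x,Dv)\cdot Du &= \beta(x,Dv)\cdot Dv-G(x,Dv), && \text{on }\partial\Omega\times(0,T),\\
u(\cdot,0) &= u_0, && \text{in }\Omega.
\end{aligned}
\right.
\]
Existence of $u$ and the Schauder bound $\|u\|_{C^{2+\alpha,1+\alpha/2}}\le C(\|f\|_{C^{\alpha,\alpha/2}}+\|u_0\|_{C^{2+\alpha}})$ follow from the standard theory for linear parabolic equations with oblique boundary data (Ladyzhenskaya--Solonnikov--Ural'tseva or Lieberman). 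The first-order compatibility condition at $t=0$ reduces to $G(x,Du_0)=0$, which holds by hypothesis. Any fixed point $u=\Phi(u)$ satisfies $G(x,Du)=0$ on $\partial\Omega\times(0,T)$, hence solves the original problem.

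The core step is to show that $\Phi$ is a contraction on $\mathcal{B}_R$ for $T$ sufficiently small. Subtracting the linear problems for $\Phi(v_1)$ and $\Phi(v_2)$ produces a linear oblique problem with zero initial data whose source and boundary terms are $O(\|v_1-v_2\|_{C^{2+\alpha,1+\alpha/2}})$; combining the Schauder estimate with the $T^\delta$ gain that zero initial data provides yields $\|\Phi(v_1)-\Phi(v_2)\|\le CT^\delta\|v_1-v_2\|$ for some $\delta>0$, and shrinking $T$ produces a unique fixed point $u\in C^{2+\alpha,1+\alpha/2}(\bar\Omega_T)$. Strict convexity of $u(\cdot,t)$ for small $t$ is inherited from that of $u_0$ by continuity of $D^2u$, and a standard continuation argument extends the solution to a maximal interval $[0,T_{\max})$. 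The main obstacle is establishing the contraction estimate with a genuine $T^\delta$ gain: this requires careful application of the parabolic Schauder estimates for oblique problems and verification of the compatibility conditions of sufficient order at the corner $\partial\Omega\times\{0\}$ to secure $C^{2+\alpha,1+\alpha/2}$ regularity up to $t=0$.
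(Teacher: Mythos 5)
The paper does not actually supply a proof of Lemma~\ref{l1.2}: it is quoted verbatim as Theorems~8.8 and~8.9 of Lieberman's monograph \cite{GM} and used as a black box. So there is no internal argument to compare against; what you have written is a free-standing proof sketch. Your overall strategy --- freeze the coefficient $\beta(x,Dv)$ in the oblique boundary condition, solve the resulting linear heat equation by linear Schauder theory, and run a Banach fixed-point iteration with a $T^{\delta}$ gain coming from the vanishing of $v-u_0$ at $t=0$ --- is the standard route to short-time existence for quasilinear boundary conditions, and it is close in spirit to the fixed-point machinery of \cite{GM}, Chapter~8. The structure is sound: the consistency check that a fixed point satisfies $G(x,Du)=0$, the zeroth-order compatibility $G(x,Du_0)=0$ at the corner, the inheritance of strict convexity for small $t$ by continuity of $D^{2}u$, and the continuation step are all correctly identified.

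There is, however, one genuine gap that you inherit from the statement but do not flag: the hypothesis $G\in C^{1}(\partial\Omega\times\mathbb{R}^{n})$ is too weak for your argument (and, strictly read, for the conclusion). To apply the linear parabolic Schauder estimate with boundary operator $\beta(x,Dv)\cdot D$, the coefficient field $\beta(\cdot,Dv(\cdot,\cdot))=G_{p}(\cdot,Dv)$ must lie in $C^{1+\alpha,(1+\alpha)/2}(\partial\Omega\times(0,T))$, which for $v\in C^{2+\alpha,1+\alpha/2}$ forces $G_{p}$ to be at least $C^{1+\alpha}$ in its arguments, i.e.\ $G$ of class roughly $C^{2+\alpha}$. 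Moreover, your contraction bound relies on the quadratic cancellation
$\beta(x,Dv_{1})\cdot Dv_{1}-G(x,Dv_{1})-\beta(x,Dv_{2})\cdot Dv_{2}+G(x,Dv_{2})
=\bigl[\beta(x,Dv_{1})-\beta(x,Dv_{2})\bigr]\cdot\bigl(Dv_{1}-D\Phi(v_{2})\bigr)+O\bigl(\lvert Dv_{1}-Dv_{2}\rvert^{2}\bigr)$,
which again requires $G$ to be twice differentiable in $p$ with Hölder-continuous second derivatives; with $G\in C^{1}$ only, the linearization error is merely $o(\lvert Dv_{1}-Dv_{2}\rvert)$ and the contraction estimate degenerates. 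In the paper's actual applications (Proposition~\ref{p1.1}) the boundary operator is built from the smooth defining function $h$, so this causes no harm there, but a complete proof of the lemma should state the stronger regularity requirement on $G$ explicitly rather than $C^{1}$. Finally, a minor point: your ball-invariance step needs a little more care than indicated, since the forcing $f+\Delta u_{0}$ in the equation for $\Phi(v)-u_{0}$ is not small; the standard fix is to center the ball at $\Phi(u_{0})$ (whose difference from $u_{0}$ is controlled, not small, in $C^{2+\alpha,1+\alpha/2}$) and to run the contraction in a slightly weaker H\"older topology $C^{2+\alpha',1+\alpha'/2}$, $\alpha'<\alpha$, where the $T^{\delta}$ gain from zero initial data is available.
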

According to the proof of \cite{JU}, one can verify the oblique boundary condition.
\begin{lemma}[J. Urbas\cite{JU}]\label{l1.3}\quad
\\
$u\in C^{2}(\bar{\Omega})$  with $D^{2}u>0$
$\Longrightarrow$ $\inf_{\partial\Omega}h_{p_{k}}(Du)\nu_{k}>0$ where $\nu=(\nu_{1},\nu_{2}, \cdots,\nu_{n})$ is the unit inward normal vector of $\partial\Omega$,
i.e. $h(Du)=0$ is strictly oblique.
\end{lemma}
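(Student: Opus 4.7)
The plan is to combine the strict convexity of $u$ with the strict convexity of $\tilde{\Omega}$ (encoded in the strict concavity hypothesis on $h$) in a short geometric argument. Fix $x_{0}\in\partial\Omega$ and set $p_{0}=Du(x_{0})\in\partial\tilde{\Omega}$. Since $h(p_{0})=0$ and $h$ is uniformly concave with constant $\theta>0$, I first convert strict concavity into the quadratic support inequality
$$
Dh(p_{0})\cdot (y-p_{0})\;\geq\;\tfrac{\theta}{2}|y-p_{0}|^{2}\qquad\text{for every }y\in\bar{\tilde{\Omega}},
$$
which plays the role of a reinforced supporting hyperplane and is the only place the convexity of $\tilde{\Omega}$ enters.

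Next I probe this inequality along inward directions at $x_{0}$. For any $w$ with $w\cdot\nu>0$ and small $\epsilon>0$ one has $x_{0}+\epsilon w\in\Omega$, hence $Du(x_{0}+\epsilon w)\in\tilde{\Omega}$; substituting $y=Du(x_{0}+\epsilon w)$ into the support inequality, dividing by $\epsilon$, and letting $\epsilon\to 0^{+}$ yields
$$
Dh(p_{0})\cdot D^{2}u(x_{0})\,w\;=\;h_{p_{k}}(p_{0})\,u_{ki}(x_{0})\,w_{i}\;\geq\;0\qquad\text{for every }w\text{ with }w\cdot\nu>0.
$$
By continuity the inequality extends to the closed half-space $w\cdot\nu\geq 0$, and replacing a tangential $w$ by $-w$ shows $Dh(p_{0})\cdot D^{2}u(x_{0})\,w=0$ for every $w\perp\nu$. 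Hence $D^{2}u(x_{0})\,Dh(p_{0})=\alpha\nu$ for some $\alpha\geq 0$. Positive definiteness of $D^{2}u(x_{0})$ together with $Dh(p_{0})\neq 0$ (since $|Dh(p_{0})|=1$ on $\partial\tilde{\Omega}$) forces the left-hand side to be nonzero, so in fact $\alpha>0$.

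Inverting gives $Dh(p_{0})=\alpha\,[D^{2}u(x_{0})]^{-1}\nu$, so
$$
h_{p_{k}}(Du(x_{0}))\,\nu_{k}\;=\;Dh(p_{0})\cdot\nu\;=\;\alpha\,\nu^{T}[D^{2}u(x_{0})]^{-1}\nu\;>\;0,
$$
where the final strict inequality uses positive definiteness of $[D^{2}u(x_{0})]^{-1}$. Uniformity $\inf_{\partial\Omega}h_{p_{k}}(Du)\nu_{k}>0$ then follows from compactness of $\partial\Omega$ together with the $C^{2}(\bar{\Omega})$ regularity of $u$. The one delicate point I would watch is the extraction step $D^{2}u(x_{0})Dh(p_{0})\parallel\nu$: probing only in the normal direction $w=\nu$ yields the weaker inequality $h_{p_{k}}u_{ki}\nu_{i}\geq 0$, which does not directly translate into a sign statement for $Dh(p_{0})\cdot\nu$. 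It is the combined use of \emph{all} inward $w$, together with positive definiteness of $D^{2}u$, that enables the inversion and produces the desired strict obliqueness.
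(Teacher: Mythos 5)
Your argument is correct. The paper itself does not prove the lemma---it cites Urbas \cite{JU}---so there is no in-text proof to compare against. That said, your intermediate conclusion $D^{2}u(x_{0})\,Dh(p_{0})=\alpha\nu$ with $\alpha>0$ is precisely the geometric content behind the identity (\ref{e3.0}), $\langle\beta,\nu\rangle=\sqrt{u^{ij}\nu_{i}\nu_{j}\,h_{p_{k}}h_{p_{l}}u_{kl}}$, which the paper quotes from Urbas in the proof of Lemma \ref{l3.4}: from $D^{2}u\,Dh=\alpha\nu$ one reads off $h_{p_{k}}h_{p_{l}}u_{kl}=\alpha\langle\beta,\nu\rangle$ and $\langle\beta,\nu\rangle=\alpha\,u^{ij}\nu_{i}\nu_{j}$, whose product gives $\langle\beta,\nu\rangle^{2}$. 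So your proof simultaneously establishes the lemma and recovers the identity the paper later needs. One hypothesis to surface explicitly: the step $Du(x_{0}+\epsilon w)\in\tilde\Omega$ uses the standing assumption $Du(\Omega)\subset\tilde\Omega$, which is implicit in the lemma's phrasing (it is part of the second boundary value setup along with $h(Du)=0$ on $\partial\Omega$). An equivalent route that needs only local data is to obtain $Dh(p_{0})\cdot D^{2}u(x_{0})\tau=0$ for tangential $\tau$ by differentiating $h(Du)\equiv 0$ along $\partial\Omega$, and $Dh(p_{0})\cdot D^{2}u(x_{0})\nu\geq 0$ from the fact that $h\circ Du$ is nonnegative in $\Omega$ and vanishes on $\partial\Omega$; after that, the positive-definiteness/inversion step of your argument proceeds unchanged. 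Finally, the quadratic support inequality requires $D^{2}h\leq-\theta I$ on the closed set $\bar{\tilde\Omega}$ rather than merely on the open set; this follows from the stated bound by continuity, but deserves a word.
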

We are now in a position to prove the short-time existence of solutions of (\ref{e1.3})
which is equivalent to the problem (\ref{e1.1}).
\begin{Proposition}\label{p1.1}
According to the conditions in Theorem \ref{t1.1}, there exists some $T_{max}>0$ and $u\in C^{2+\alpha,1+\frac{\alpha}{2}}(\bar{\Omega}_{T_{max}})$ which depend only on $\Omega$, $\tilde{\Omega}$, $u_{0}$,
 such that $u$ is  a solution of (\ref{e1.3}) and  is strictly convex in $x$ variable.
\end{Proposition}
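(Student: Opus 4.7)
My strategy is to recast (\ref{e1.3}) as an operator equation $J(u)=w_0$ between Banach spaces and apply the Fr\'{e}chet inverse function theorem (Lemma \ref{l1.1}). For a parameter $T>0$ to be chosen sufficiently small, set
\[
X=C^{2+\alpha,\,1+\alpha/2}(\bar{\Omega}_T),\quad
Y=C^{\alpha,\,\alpha/2}(\bar{\Omega}_T)\times C^{1+\alpha,\,(1+\alpha)/2}(\partial\Omega\times[0,T])\times C^{2+\alpha}(\bar{\Omega}),
\]
and define $J:X\to Y$ by
\[
J(v)=\bigl(\partial_t v-F(D^2v),\ h(Dv)|_{\partial\Omega\times[0,T]},\ v|_{t=0}\bigr).
\]
Solving (\ref{e1.3}) on $[0,T]$ is then equivalent to finding a strictly convex (in $x$) $u\in X$ with $J(u)=w_0:=(0,0,u_0)$.

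For the base point I take the first-order Taylor approximation in time
\[
v_0(x,t)=u_0(x)+tF(D^2u_0(x)),
\]
which lies in $X$ and is strictly convex in $x$ for $T$ small by continuity. Each of the first two components of $J(v_0)$ vanishes at $t=0$ (because $u_0$ satisfies $h(Du_0)=0$ on $\partial\Omega$) and depends smoothly on $t$, so both tend to $0$ in their respective H\"{o}lder norms as $T\to 0$; the third component equals $u_0$ exactly. Hence $\|J(v_0)-w_0\|_Y\to 0$ as $T\to 0$.

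The G\^{a}teaux derivative of $J$ is the linear oblique parabolic operator
\[
DJ[v]\varphi=\bigl(\partial_t\varphi-F^{ij}(D^2v)\varphi_{ij},\ h_{p_k}(Dv)\varphi_k|_{\partial\Omega},\ \varphi|_{t=0}\bigr).
\]
For $v$ in a small $X$-neighborhood of $v_0$, $D^2v$ remains positive definite, so $[F^{ij}(D^2v)]$ is uniformly elliptic with H\"{o}lder-continuous entries, and Lemma \ref{l1.3} provides the strict obliqueness $h_{p_k}(Dv)\nu_k>0$ on $\partial\Omega$. Applying the variable-coefficient extension of Lemma \ref{l1.2} to
\[
\partial_t\varphi-F^{ij}(D^2v)\varphi_{ij}=f,\quad h_{p_k}(Dv)\varphi_k=g\ \text{on}\ \partial\Omega,\quad \varphi(\cdot,0)=\psi,
\]
yields a unique $\varphi\in X$ together with a Schauder estimate that is uniform for $v$ in this neighborhood; this produces the bounded right inverse $L[v]$ required by Lemma \ref{l1.1}. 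Choosing $T_{\max}$ so small that $\|J(v_0)-w_0\|_Y<R/m$ in the notation of Lemma \ref{l1.1}, we obtain $u\in X$ with $J(u)=w_0$ and $\|u-v_0\|_X<R$, and shrinking the radius $R$ if necessary keeps $u$ strictly convex in $x$.

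I expect the principal obstacle to be the uniform boundedness of $L[v]$: it rests on H\"{o}lder-continuous dependence of the Schauder constant for the oblique linear parabolic BVP on the variable coefficients $F^{ij}(D^2v)$ and $h_{p_k}(Dv)$. A closely related subtlety is the compatibility condition $h_{p_k}(Dv(\cdot,0))\psi_k=g(\cdot,0)$ on $\partial\Omega$, which is needed for $\varphi$ to lie in $C^{2+\alpha,1+\alpha/2}$ up to $t=0$; this should be built into $Y$ by restricting to the affine subspace on which compatibility holds, with $v_0$ chosen precisely to make $w_0$ lie there.
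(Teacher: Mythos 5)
Your overall strategy is the same as the paper's: recast (\ref{e1.3}) as $J(u)=w$ between the Banach spaces $X$ and $Y$, compute the G\^{a}teaux derivative, build a right inverse $L[v]$ from linear oblique parabolic Schauder theory (Lemma \ref{l1.2}), and invoke Lemma \ref{l1.1}. The one place where you diverge from the paper is the choice of base point, and that is where a genuine gap appears.

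You take $v_0(x,t)=u_0(x)+tF(D^2u_0(x))$ and assert that $v_0\in X=C^{2+\alpha,1+\alpha/2}(\bar{\Omega}_T)$. But membership in $X$ requires $D^2_xv_0$ to exist and be H\"older continuous, and
\[
D^2_xv_0 = D^2u_0 + t\,D^2_x\!\bigl(F(D^2u_0)\bigr),
\]
which involves \emph{fourth} spatial derivatives of $u_0$. Under the hypothesis of Theorem \ref{t1.1} one only has $u_0\in C^{2+\alpha}$, so $F(D^2u_0)$ is merely $C^\alpha$ and its second derivatives do not exist; $v_0\notin X$. The same problem hits your estimate of the boundary component: $h(Dv_0)\in C^{1+\alpha,(1+\alpha)/2}(\partial\Omega\times[0,T])$ would require controlling $h_{p_k}(Dv_0)(v_0)_{kj}$, again two derivatives beyond what $u_0$ supplies. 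So the statement ``$\|J(v_0)-w_0\|_Y\to 0$'' is not even well-posed at the stated regularity.

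The paper avoids this by taking as base point the solution $\hat{u}$ of the \emph{linear} parabolic problem (\ref{e2.3}): $\partial_t\hat u-\triangle\hat u=F(D^2u_0)-\triangle u_0$ with $h(D\hat u)=0$ on $\partial\Omega$ and $\hat u|_{t=0}=u_0$. The right-hand side is only $C^\alpha$, which is exactly what Lemma \ref{l1.2} needs, and Schauder theory returns $\hat u\in C^{2+\alpha,1+\alpha/2}$ — no extra regularity of $u_0$ is consumed. This choice also makes the second and third components of $J(\hat u)$ \emph{identically} $0$ and $u_0$, so the only quantity to estimate is $\hat f=\partial_t\hat u-F(D^2\hat u)$, and the compatibility issue you flagged at the end disappears because $w_0$ and $w$ already lie in the compatible affine subspace. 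To repair your argument you would either need to strengthen the hypothesis to $u_0\in C^{4+\alpha}$ (not what the theorem states) or replace $v_0$ by a base point produced through a parabolic solve, as the paper does.
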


\begin{proof}
Denote the Banach spaces
$$X=C^{2+\alpha,1+\frac{\alpha}{2}}(\bar{\Omega}_{T}),
\quad Y=C^{\alpha,\frac{\alpha}{2}}(\bar{\Omega}_{T})
\times C^{1+\alpha,\frac{1+\alpha}{2}}(\partial\Omega\times(0,T))\times C^{2+\alpha}(\bar{\Omega}),$$
where $$\parallel\cdot\|_{Y}=\parallel\cdot\|_{C^{\alpha,\frac{\alpha}{2}}(\bar{\Omega}_{T})}
+\parallel\cdot\|_{C^{1+\alpha,\frac{1+\alpha}{2}}(\partial\Omega\times(0,T))}+
\parallel\cdot\|_{C^{2+\alpha}(\bar{\Omega})}.$$
Define a map $$J:\quad X\rightarrow Y$$ by
$$J(u)=\left\{ \begin{aligned}
&\frac{\partial u}{\partial t}-F(D^{2}u), &\quad (x,t)\in\Omega_{T}, \\
&h(Du), &\quad (x,t)\in \partial\Omega\times(0,T),\\
&u, &\quad (x,t)\in \Omega\times\{t=0\}.
\end{aligned} \right.$$
The strategy is now to use the inverse function theorem to obtain the local existence result.
The computation of the G$\hat{a}$teaux derivative shows that:
$$\forall u,v\in X,\quad DJ[u](v)\triangleq\frac{d}{d\tau}J(u+\tau v)|_{\tau=0}=\left\{ \begin{aligned}
&\frac{\partial v}{\partial t}-F^{ij}(D^{2}u)v_{ij}, &\quad (x,t)\in\Omega_{T}, \\
&h_{p_{i}}(Du)v_{i}, &\quad (x,t)\in \partial\Omega\times(0,T),\\
&v, &\quad (x,t)\in \Omega\times\{t=0\}.
\end{aligned} \right.$$
Using Lemma \ref{l1.2} there exists $T_{max}>0$ such that  we can find $\hat{u}\in X$ to be strictly convex in $x$ variable,  which satisfies the following equations :
\begin{equation}\label{e2.3}
\left\{ \begin{aligned}\frac{\partial \hat{u}}{\partial t}-\triangle \hat{u}&=F(D^{2}u_{0})-\triangle u_{0},
& T_{max}>t>0,\quad x\in \Omega, \\
h(D\hat{u})&=0,& \qquad T_{max}>t>0,\quad x\in\partial\Omega.\\
 \hat{u}&=u_{0}, & \qquad\quad t=0,\quad x\in \Omega.
\end{aligned} \right.
\end{equation}
For each $(f,g,w)\in Y $, using Lemma \ref{l1.2} again  there exists an unique $v\in X$ satisfying $DJ[\hat{u}](v)=(f,g,w)$, i.e.,
\begin{equation*}
\left\{ \begin{aligned}\frac{\partial v}{\partial t}-F^{ij}(D^{2}\hat{u})v_{ij}&=f,
& T_{max}>t>0,\quad x\in \Omega, \\
h_{p_{i}}(D\hat{u})v_{i}&=g,& \qquad T_{max}>t>0,\quad x\in\partial\Omega.\\
 v&=w, & \qquad\quad t=0,\quad x\in \Omega.
\end{aligned} \right.
\end{equation*}
Then  the derivative $DJ[\hat{u}]$ has a right inverse $L[\hat{u}]$ and for $T=T_{max}$ we see that
\begin{equation}\label{e2.4}
\forall \gamma=(f,g,w)\in Y,\quad DJ[\hat{u}]L[\hat{u}]\gamma=\gamma.
\end{equation}
If  set $$\hat{f}=\frac{\partial \hat{u}}{\partial t}-F(D^{2}\hat{u}),\quad w_{0}=(\hat{f}, 0,u_{0}),\quad
w=(0, 0,u_{0})$$
then one can show that
\begin{equation*}
\begin{aligned}
\parallel\hat{f}-0\parallel_{C^{\alpha,\frac{\alpha}{2}}(\bar{\Omega}_{T_{max}})}
&=\parallel\triangle \hat{u}-\triangle u_{0}+F(D^{2}u_{0})-F(D^{2}\hat{u})\parallel_{C^{\alpha,\frac{\alpha}{2}}(\bar{\Omega}_{T_{max}})}\\
&\leq\parallel\triangle \hat{u}-\triangle u_{0}\parallel_{C^{\alpha,\frac{\alpha}{2}}(\bar{\Omega}_{T_{max}})}+\parallel F(D^{2}u_{0})-F(D^{2}\hat{u})\parallel_{C^{\alpha,\frac{\alpha}{2}}(\bar{\Omega}_{T_{max}})}\\
&\leq C\parallel \hat{u}-u_{0}\parallel_{C^{2+\alpha,1+\frac{\alpha}{2}}(\bar{\Omega}_{T_{max}})}
\end{aligned}
\end{equation*}
where $C$ is a constant depending only on the known data. We may apply (\ref{e2.3}) to conclude:
$\forall \epsilon>0,$  $\exists T_{max}>0$ to be small enough such that
$$\parallel\hat{f}-0\parallel_{C^{\alpha,\frac{\alpha}{2}}(\bar{\Omega}_{T_{max}})}\leq  C\parallel \hat{u}-u_{0}\parallel_{C^{2+\alpha,1+\frac{\alpha}{2}}(\bar{\Omega}_{T_{max}})}<\epsilon.$$
Thus it's obtained
$$\parallel w-w_{0}\|_{Y}=
\parallel0-\hat{f}\|_{C^{\alpha,\frac{\alpha}{2}}(\bar{\Omega}_{T_{max}})}<\epsilon.
$$
Combining with (\ref{e2.4}) and using Lemma \ref{l1.1}  it gives the desired results.
\end{proof}
\begin{rem}
By the strong maximum principle, the strictly convex solution to (\ref{e1.3}) is unique.
\end{rem}

\section{ Preliminary results }
In this section, the $C^{2}$ a priori bound is accomplished by making the second derivative estimates on the boundary
for solution of parabolic type special lagrangian equation. This treatment
is similar to the problems presented in \cite{JK}, \cite{OK} and \cite{JU}, but requires some modification to
accommodate the particular  situation. Specifically, Corollary \ref{c3.3} is needed in order to
drive differential inequalities from  barriers  which can be used.

For the convenience, we set
$$[g^{ij}]\triangleq[\frac{\partial F(D^{2}u)}{\partial u_{ij}}]=[\delta_{ij}+u_{ik}u_{kj}]^{-1},\quad
\beta^{k}\triangleq\frac{\partial h(Du)}{\partial u_{k}}=h_{p_{k}}(Du)$$
and  $\langle\cdot,\cdot\rangle$ be the inner product in $\mathbb{R}^{n}$.
By Proposition \ref{p1.1} and the regularity theory of parabolic equations,   we may assume that $u$ is a strictly convex solution of (\ref{e1.3}) in the class $C^{2+\alpha,1+\frac{\alpha}{2}}(\bar{\Omega}_{T})\cap C^{\infty}(\Omega_{T})$ for some $T=T_{max}>0$.
\begin{lemma}[$\dot{u}$-estimates]\label{l3.1}\quad
\\
As long as the convex solution to (\ref{e1.3}) exists, the following estimates hold, i.e.
\begin{equation*}\label{e3.1}
0\leq\dot{u}\triangleq\frac{\partial u}{\partial t}\leq\Theta_{0}\triangleq\max_{\bar{\Omega}}F(D^{2}u_{0}).
\end{equation*}
\end{lemma}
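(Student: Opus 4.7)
The plan is to derive a linear parabolic equation for $\dot{u}$, check that it inherits an oblique boundary condition, and then apply the maximum principle.

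First, I would differentiate the evolution equation in \eqref{e1.3} with respect to $t$. Since $F(D^2u)=\sum_i\arctan\lambda_i$ and $F^{ij}=g^{ij}$, the chain rule gives
\begin{equation*}
\frac{\partial \dot{u}}{\partial t} - g^{ij}\dot{u}_{ij}=0 \qquad \text{in }\Omega_T.
\end{equation*}
Because $u$ is strictly convex in $x$, the matrix $[g^{ij}]=[\delta_{ij}+u_{ik}u_{kj}]^{-1}$ is uniformly positive definite on any compact set where $D^2u$ is bounded, so this is a genuine linear uniformly parabolic equation for $\dot u$. Differentiating the boundary condition $h(Du)=0$ in $t$ yields
\begin{equation*}
\beta^k \dot{u}_k = h_{p_k}(Du)\,\dot{u}_k = 0 \qquad \text{on }\partial\Omega\times(0,T),
\end{equation*}
and Lemma \ref{l1.3} ensures that $\beta\cdot\nu = h_{p_k}(Du)\nu_k>0$ on $\partial\Omega$, so this boundary condition is strictly oblique. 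At $t=0$, $\dot{u}(x,0)=F(D^2u_0(x))=\sum_i\arctan\lambda_i(u_0)$, which is strictly positive (each $\lambda_i>0$ by strict convexity of $u_0$) and bounded above by $\Theta_0=\max_{\bar\Omega}F(D^2u_0)$.

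Now I would argue by the parabolic maximum principle with oblique boundary data. Suppose the maximum of $\dot u$ on $\bar\Omega_T$ is attained at some $(x_0,t_0)$ with $t_0>0$. If $x_0\in\Omega$, then at $(x_0,t_0)$ we have $\partial_t\dot u\geq 0$ and $[\dot u_{ij}]\leq 0$, which combined with positivity of $[g^{ij}]$ contradicts the linear equation (after the standard perturbation by $\varepsilon e^{-Kt}$ to rule out equality). If $x_0\in\partial\Omega$, Hopf's lemma applied to $\dot u$ at the boundary maximum yields strict negativity of the inward normal derivative $\partial_\nu\dot u(x_0,t_0)<0$. Because $\dot u(\cdot,t_0)$ attains its maximum over $\bar\Omega$ at $x_0$ while $x_0\in\partial\Omega$, its tangential gradient along $\partial\Omega$ at $x_0$ vanishes; hence $\beta\cdot D\dot u=(\beta\cdot\nu)\,\partial_\nu\dot u<0$, contradicting the oblique boundary condition $\beta^k\dot u_k=0$. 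Therefore $\max_{\bar\Omega_T}\dot u=\max_{\bar\Omega}\dot u(\cdot,0)\leq\Theta_0$. The identical argument with signs reversed shows $\min_{\bar\Omega_T}\dot u=\min_{\bar\Omega}F(D^2u_0)>0$, giving in particular $\dot u\geq 0$.

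The main technical point is justifying the boundary case of the maximum principle with the oblique condition, since one must verify that the tangential gradient truly vanishes at a boundary extremum (which is immediate here because $\dot u$ is $C^1$ up to the boundary and its restriction to $\partial\Omega$ attains its spatial extremum at $x_0$) and then combine Hopf's lemma with the sign of $\beta\cdot\nu$ from Lemma \ref{l1.3}. There are no tricky global geometric estimates — everything reduces to the linear theory applied to the time-differentiated equation — so I do not expect serious obstacles beyond being careful that the coefficients $g^{ij}$ stay smooth and positive definite on the time interval where the classical solution exists, which follows from $u\in C^{2+\alpha,1+\alpha/2}(\bar\Omega_T)$ and strict convexity in $x$.
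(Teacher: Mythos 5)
Your argument is correct and follows essentially the same route as the paper: differentiate the evolution equation and the boundary condition $h(Du)=0$ in $t$ to obtain a linear uniformly parabolic equation with a homogeneous oblique boundary condition for $\dot u$, then push the extremum to $t=0$ via the parabolic maximum principle and Hopf's lemma (using Lemma~\ref{l1.3} for strict obliqueness). The one small divergence is the lower bound: you re-run the minimum-principle/Hopf argument with signs reversed, whereas the paper simply observes that convexity of $u$ gives $\dot u = F(D^2u)=\sum_i\arctan\lambda_i\geq 0$ directly, with no further PDE argument needed.
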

\begin{proof}
We use the methods known from Lemma 2.1 in \cite{OK}. \par
From (\ref{e1.3}), a direct computation shows that
$$\frac{\partial\dot{u} }{\partial t}-g^{ij}\partial_{ij}\dot{u}=0.$$
Using the  maximum principle we see that
$$\max_{\bar{\Omega}_{T}}\dot{u} =\max_{\partial\bar{\Omega}_{T}}\dot{u}.$$
Without loss of generality, we assume that $\dot{u}\neq constant$. If $\exists x\in \partial\Omega, t>0,$ such that $\dot{u}(x,t)=\max_{\bar{\Omega}_{T}}\dot{u}.$
Then we differentiate the boundary condition and obtain
$$\dot{u}_{\beta}=\frac{\partial h(Du)}{\partial t}=0.$$
Since $\langle\beta, \nu\rangle>0$, it contradicts  the Hopf Lemma (cf.\cite{LL}) for parabolic equations.
So that
$$\dot{u}\leq \max_{\bar{\Omega}_{T}}\dot{u}
=\max_{\partial\bar{\Omega}_{T}\mid_{t=0}}\dot{u}=\max_{\bar{\Omega}}F(D^{2}u_{0}).$$
On the other hand, $u$ is  convex
$\Longrightarrow\min_{\bar{\Omega}}F(D^{2}u)\geq0\Longrightarrow\dot{u}=F(D^{2}u)\geq 0.$
Putting these facts together, the assertion follows.
\end{proof}
Since $-\frac{\pi}{2}\leq\arctan\lambda_{i}\leq\frac{\pi}{2}$,
 $\arctan\lambda_{i}=\frac{\pi}{2}\Leftrightarrow \lambda_{i}=+\infty$. Then $\Theta_{0}<\frac{n\pi}{2}$.
\begin{lemma}\label{l3.2} Let $(x,t)$  be arbitrary point of $\Omega_{T}$, and $\lambda_{1}\leq\lambda_{2}\leq\cdots\leq\lambda_{n}$
be the eigenvalues of $D^{2}u$ at $(x,t)$. Then
\begin{equation}\label{e3.2}
0\leq\lambda_{1}\leq \tan(\frac{\Theta_{0}}{n}).
\end{equation}
\end{lemma}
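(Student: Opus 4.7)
The plan is to read the bound directly off Lemma \ref{l3.1}, exploiting two facts: the solution $u$ is strictly convex (so every eigenvalue of $D^2u$ is non-negative) and the arctangent function is monotone on $[0,\infty)$.

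First I would observe that strict convexity of $u(\cdot,t)$ gives $D^2u>0$ at every $(x,t)\in\Omega_T$, and in particular $\lambda_1\geq 0$, which is the lower bound in \eqref{e3.2}. For the upper bound, the ordering $0\leq\lambda_1\leq\lambda_2\leq\cdots\leq\lambda_n$ combined with the monotonicity of $\arctan$ yields
\[
n\arctan\lambda_1 \;\leq\; \sum_{i=1}^{n}\arctan\lambda_i \;=\; \dot u.
\]
Invoking Lemma \ref{l3.1} to bound $\dot u\leq\Theta_0$, and noting (as remarked just after that lemma) that $\Theta_0<n\pi/2$, we get $\arctan\lambda_1\leq\Theta_0/n<\pi/2$. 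Since $\tan$ is strictly increasing on $[0,\pi/2)$, applying it to both sides yields $\lambda_1\leq\tan(\Theta_0/n)$, as claimed.

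In truth there is no real obstacle: the lemma is essentially a two-line consequence of the $\dot u$-estimate together with the strict convexity assumption carried along from Proposition \ref{p1.1}. The only point requiring a moment's care is that $\Theta_0/n$ lies strictly below $\pi/2$, so that $\tan(\Theta_0/n)$ is finite and the inequality makes sense; this is exactly what the remark following Lemma \ref{l3.1} guarantees.
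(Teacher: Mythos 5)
Your proof is correct and follows the paper's argument exactly: bound $n\arctan\lambda_1 \leq \sum_i\arctan\lambda_i = \dot u \leq \Theta_0$ via Lemma \ref{l3.1}, use convexity for the lower bound, and apply $\tan$. Your additional remark that $\Theta_0/n < \pi/2$ ensures $\tan(\Theta_0/n)$ is finite is a sensible point of care that the paper leaves implicit.
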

\begin{proof}
It follows from the definition of $F(D^{2}u)$ and Lemma \ref{l3.1}:
$$n\arctan\lambda_{1}\leq \Sigma_{i=1}^{n}\arctan\lambda_{i}=\dot{u}\leq \Theta_{0}.$$
Combining with the convexity of $u$ we obtain
$$0\leq \arctan\lambda_{1}\leq \frac{\Theta_{0}}{n}$$
which yields (\ref{e3.2}).
\end{proof}
Now we can show the operator $F$ to be uniformly elliptic which will play an important role
in the barrier arguments.
\begin{Corollary}\label{c3.3}
For any $(x,t)\in\Omega_{T},$ we have
\begin{equation*}\label{e3.3}
\frac{1}{1+\tan(\frac{\Theta_{0}}{n})^{2}}\leq\Sigma_{i=1}^{n}g^{ii}\leq n.
\end{equation*}
\end{Corollary}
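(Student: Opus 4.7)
The plan is to diagonalize $D^2u$ at the given point and express $\Sigma g^{ii}$ explicitly in terms of the eigenvalues $\lambda_i$. Since $g^{ij}=[\delta_{ij}+u_{ik}u_{kj}]^{-1}$, at a point where $D^2u$ is diagonal with entries $\lambda_1,\dots,\lambda_n$, the matrix $[\delta_{ij}+u_{ik}u_{kj}]$ is also diagonal with entries $1+\lambda_i^2$. Hence
\[
\Sigma_{i=1}^{n} g^{ii} \;=\; \Sigma_{i=1}^{n}\frac{1}{1+\lambda_i^2}.
\]
Because this quantity is the trace of $g^{ij}$ it is coordinate-invariant, so the identity holds at an arbitrary point of $\Omega_T$ regardless of the coordinate system.

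For the upper bound, I would simply note that every $\lambda_i\in\mathbb{R}$ gives $\frac{1}{1+\lambda_i^2}\le 1$, and summing over $i=1,\dots,n$ yields $\Sigma g^{ii}\le n$.

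For the lower bound, the key input is Lemma \ref{l3.2}, which states that the smallest eigenvalue satisfies $0\le\lambda_1\le\tan(\Theta_0/n)$. Since $u$ is strictly convex, all $\lambda_i\ge 0$, so every term $\frac{1}{1+\lambda_i^2}$ is nonnegative; dropping all terms but the $\lambda_1$-term gives
\[
\Sigma_{i=1}^{n}\frac{1}{1+\lambda_i^2}\;\ge\;\frac{1}{1+\lambda_1^2}\;\ge\;\frac{1}{1+\tan(\Theta_0/n)^2},
\]
where the second inequality uses that $s\mapsto \tfrac{1}{1+s^2}$ is decreasing on $[0,\infty)$ combined with $\lambda_1\le\tan(\Theta_0/n)$.

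There is no real obstacle here; the corollary is essentially an algebraic repackaging of Lemma \ref{l3.2} once one recognizes the eigenvalue formula for $g^{ii}$. The only point requiring a brief justification is that $\tan(\Theta_0/n)$ is actually finite, which follows from the observation already made after Lemma \ref{l3.1} that $\Theta_0<\tfrac{n\pi}{2}$, hence $\Theta_0/n<\pi/2$.
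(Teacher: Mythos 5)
Your proof is correct and follows essentially the same argument as the paper: diagonalize $D^2u$ to write $\Sigma g^{ii}=\Sigma\frac{1}{1+\lambda_i^2}$, then bound from above by $n$ and from below by the $\lambda_1$-term using Lemma~\ref{l3.2}. The only (welcome) additions are the explicit remarks on coordinate invariance of the trace and the finiteness of $\tan(\Theta_0/n)$, which the paper leaves implicit.
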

\begin{proof}
We observe
$$\Sigma_{i=1}^{n}g^{ii}=\Sigma_{i=1}^{n}\frac{1}{1+\lambda_{i}^{2}}.$$
By Lemma \ref{l3.2}, we obtain
$$\frac{1}{1+\tan(\frac{\Theta_{0}}{n})^{2}}\leq\frac{1}{1+\lambda_{1}^{2}}\leq\Sigma_{i=1}^{n}g^{ii}
=\Sigma_{i=1}^{n}\frac{1}{1+\lambda_{i}^{2}}\leq n.$$
\end{proof}
Returning to Lemma \ref{l1.3}, using Corollary \ref{c3.3} we can get a uniform positive lower bound
of the quantity $\inf_{\partial\Omega}h_{p_{k}}(Du)\nu_{k}$ which does not depend on $t$.
\begin{lemma}\label{l3.4}
As long as the uniformly convex solution to (\ref{e1.3}) exists,  the strict oblique estimates
can be obtained by
\begin{equation}\label{e3.4}
\langle\beta, \nu\rangle\geq \frac{1}{C_{1}}>0,
\end{equation}
where the constant $C_{1}$ is independent of $t$.
\end{lemma}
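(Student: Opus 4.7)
The plan is to carry out a boundary barrier argument in the spirit of Urbas's treatment of the second boundary value problem for Monge--Amp\`ere equations in \cite{JU} (and the parallel Hessian/Gauss flow analysis of Schn\"urer--Smoczyk in \cite{OK}), with the essential new feature that the uniform two-sided control on $\mathrm{tr}(g^{ij})$ supplied by Corollary~\ref{c3.3} plays the role that the determinant bound plays in Monge--Amp\`ere. I would extend the inward unit normal to a neighborhood of $\partial\Omega$ by setting $\nu(x)=D\tilde h(x)$ (which agrees with the unit inward normal on $\partial\Omega$ because $|D\tilde h|_{\partial\Omega}=1$) and introduce the auxiliary function
$$w(x,t)=h_{p_k}(Du(x,t))\,\tilde h_{x_k}(x)=\beta^k\tilde h_k,$$
so that $w\big|_{\partial\Omega}=\langle\beta,\nu\rangle$. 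The lemma then reduces to a positive lower bound for $w$ on $\partial\Omega\times[0,T]$ that is independent of $T$.

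The first technical step is to apply the linearized operator $L=\partial_t-g^{ij}\partial_{ij}$ to $w$. Differentiating the flow equation $u_t=F(D^2u)$ once in $x_l$ gives $u_{tl}=g^{ij}u_{ijl}$, which makes all third-order derivatives of $u$ cancel in $Lw$, leaving
$$Lw=-g^{ij}h_{p_kp_lp_m}u_{li}u_{mj}\tilde h_k-2g^{ij}h_{p_kp_l}u_{li}\tilde h_{kj}-g^{ij}h_{p_k}\tilde h_{kij}.$$
The combinations $g^{ij}u_{il}u_{mj}=[(I+(D^2u)^2)^{-1}(D^2u)^2]_{lm}$ and $g^{ij}u_{il}=[(I+(D^2u)^2)^{-1}D^2u]_{lj}$ have operator norms bounded respectively by $1$ and $\tfrac12$ (since the corresponding eigenvalue functions $\lambda^2/(1+\lambda^2)$ and $\lambda/(1+\lambda^2)$ satisfy these bounds), while $\mathrm{tr}(g)\leq n$ by Corollary~\ref{c3.3}, and the derivatives of $h,\tilde h$ are bounded on the compact closures of $\tilde\Omega,\Omega$. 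Consequently $|Lw|\leq C_0$ for a constant $C_0=C_0(\Omega,\tilde\Omega,\Theta_0)$ independent of $T$.

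Next, I form the barrier $\varphi=w-A\tilde h$. Using the strict concavity $\tilde h_{ij}\xi^i\xi^j\leq-\tilde\theta|\xi|^2$ together with the lower bound $\mathrm{tr}(g)\geq(1+\tan^2(\Theta_0/n))^{-1}$ from Corollary~\ref{c3.3}, one obtains
$$L\varphi=Lw+Ag^{ij}\tilde h_{ij}\leq C_0-\frac{A\tilde\theta}{1+\tan^2(\Theta_0/n)}\leq 0$$
for $A$ large depending only on $C_0,\tilde\theta,\Theta_0$. By the parabolic minimum principle, $\min_{\bar\Omega_T}\varphi$ is attained on the parabolic boundary. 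If the minimum is at $t=0$, then Lemma~\ref{l1.3} applied to the strictly convex $u_0$ gives a positive lower bound for $w|_{\partial\Omega\times\{0\}}$, and the barrier transfers this (after a suitable calibration of $A$) to a uniform lower bound on $\partial\Omega\times[0,T]$. If instead the minimum is attained at some $(x_0,t_0)\in\partial\Omega\times(0,T]$, then $x_0$ is also a minimum of $w\big|_{\partial\Omega\times\{t_0\}}$; tangentially differentiating $h(Du)\equiv 0$ on $\partial\Omega$ shows $\beta^k u_{k\tau}=0$ for every tangent $\tau$, i.e.\ $D^2u\cdot\beta=\mu\nu$ for some scalar $\mu$, and combining this with the vanishing of $\nabla_\tau w$, the Hopf inequality $\varphi_\nu(x_0,t_0)>0$ (equivalently $w_\nu>A$), and the strict concavity constants $\theta,\tilde\theta$ produces an algebraic inequality that forces $w(x_0,t_0)\geq c>0$.

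The main obstacle is this lateral-boundary case: because the largest eigenvalue $\lambda_n$ of $D^2u$ is not controlled a priori, the naive bound $w_\nu\leq C\|D^2u\|$ is useless, and one must instead exploit that $D^2u\cdot\beta$ is purely normal to $\partial\Omega$ and work algebraically with the concavity constants $\theta,\tilde\theta$ to extract a bound of the form $w_\nu\leq P(w)$ with $P$ determined only by the fixed geometric data of $\Omega,\tilde\Omega$. This is precisely where the structural simplifications of the Monge--Amp\`ere setting in \cite{JU} are unavailable and the modification anticipated in the paper's introduction to the section is required; Corollary~\ref{c3.3} is what makes the substitute barrier argument run through.
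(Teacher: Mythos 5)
Your barrier computation — bounding $\lvert Lw\rvert$ by a constant using Corollary~\ref{c3.3}, then adding $-A\tilde h$ to force the minimum to the parabolic boundary, and extracting a Hopf inequality for the normal derivative at a minimizing lateral point — is genuinely in the spirit of the paper's argument, and the estimate $\lvert Lw\rvert\le C\sum g^{ii}$ that you get is exactly what the paper records in its displays \textup{(3.15)} and the surrounding text. However, you stop precisely at the step that carries the load. You write that the Hopf inequality $w_\nu\ge -A$ together with tangential differentiation of $h(Du)=0$ "produces an algebraic inequality that forces $w(x_0,t_0)\ge c>0$," and you candidly flag this as "the main obstacle" and say one must "extract a bound of the form $w_\nu\le P(w)$." But you do not produce that bound, and the point of the lemma is that such a bound is \emph{not} available by a straightforward algebraic manipulation of the concavity constants alone, because $h_{p_k}h_{p_l}u_{kl}$ involves the uncontrolled top eigenvalue of $D^2u$.

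What the paper actually does to close this gap has two ingredients your proposal does not mention and which cannot be omitted. First, it uses the Urbas identity
\begin{equation*}
\langle\beta,\nu\rangle=\sqrt{\bigl(u^{ij}\nu_i\nu_j\bigr)\bigl(h_{p_k}h_{p_l}u_{kl}\bigr)},
\end{equation*}
which factors the obliqueness quantity into a product of two expressions. Second, the Hopf-type barrier estimate gives a dichotomy: either $h_{p_n}=\langle\beta,\nu\rangle$ is bounded below directly, or one gets a positive lower bound for $h_{p_k}h_{p_l}u_{kl}$. Then, to bound the other factor $u^{ij}\nu_i\nu_j$ from below, the paper passes to the Legendre transform $u^*$, observes (via $\arctan\lambda+\arctan\lambda^{-1}=\pi/2$) that $u^*$ solves the same type of flow on $\tilde\Omega$ with the roles of $\Omega,\tilde\Omega$ swapped, and runs the identical barrier/Hopf argument there to get either a direct lower bound on $\tilde h_{p_n}=\langle\beta,\nu\rangle$ or a lower bound on $\tilde h_{p_k}\tilde h_{p_l}u^*_{kl}=\nu_i\nu_j u^{ij}$. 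Only then does the factorization identity combine the two halves into $\langle\beta,\nu\rangle\ge 1/C_1$. Neither the factorization nor the duality appears in your sketch, and without them the "algebraic inequality" you invoke does not exist. (Also, a small sign slip: with your convention $L=\partial_t-g^{ij}\partial_{ij}$, the hypothesis $L\varphi\le 0$ makes $\varphi$ a subsolution, so it is the \emph{maximum}, not the minimum, that sits on the parabolic boundary; you want $L=g^{ij}\partial_{ij}-\partial_t$ as in the paper, or reverse the inequality.)
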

\begin{proof}
Let $(x_{0},t_{0})\in \partial\Omega\times[0,T]$ such that
$$\langle\beta, \nu\rangle(x_{0},t_{0})=h_{p_{k}}(Du)\nu_{k}=\min_{\partial\Omega\times[0,T]}\langle\beta, \nu\rangle.$$
By the computation in \cite{JU} it gives
\begin{equation}\label{e3.0}
\langle\beta, \nu\rangle=\sqrt{u^{ij}\nu_{i}\nu_{j}h_{p_{k}}h_{p_{l}}u_{kl}}.
\end{equation}
Further on, we may assume that $t_{0}>0$ and $\nu(x_{0})=(0,0,\cdots,1)\triangleq e_{n}$. As in
the proof of Lemma 8.1 in \cite{OK}, by the convexity of $\Omega$ and its smoothness, we extend $\nu$ smoothly to a tubular neighborhood of $\partial\Omega$ such that in matrix sense
\begin{equation}\label{e3.5}
D_{k}\nu_{l}\equiv \nu_{kl}\leq -\frac{1}{C}\delta_{kl}
\end{equation}
for some positive constant $C$. Let
$$v=\langle\beta, \nu\rangle+h(Du).$$
By the above assumptions and the boundary condition, it's obtained  $$v(x_{0},t_{0})=\min_{\partial\Omega\times[0,T]}v
=\min_{\partial\Omega\times[0,T]}\langle\beta, \nu\rangle.$$
In $(x_{0},t_{0})$,  we have
\begin{equation}\label{e3.6}
0=v_{r}=h_{p_{n}p_{k}}u_{kr}+ h_{p_{k}}v_{kr}+h_{p_{k}}u_{kr},\quad 1\leq r\leq n-1,
\end{equation}
\begin{equation*}\label{e3.7}
0\leq \dot{v}.
\end{equation*}
We assume that the following key estimate holds which will be proved later,
\begin{equation}\label{e3.8}
v_{n}(x_{0},t_{0})\geq -C,
\end{equation}
where $C$ is a constant depending only on $\Omega$, $u_{0}$, $h$, and $\tilde{h}$. It's not hard to check that
 (\ref{e3.8}) can be rewritten as
\begin{equation}\label{e3.9}
h_{p_{n}p_{k}}u_{kn}+ h_{p_{k}}\nu_{kn}+h_{p_{k}}u_{kn}\geq -C.
\end{equation}
Multiplying (\ref{e3.9}) with $h_{p_{n}}$ and (\ref{e3.6}) with $h_{p_{r}}$ respectively,
and summing up together  we obtain:
\begin{equation}\label{e3.10}
h_{p_{k}}h_{p_{l}}u_{kl}\geq -Ch_{p_{n}}-h_{p_{k}}h_{p_{l}}\nu_{kl}-h_{p_{k}}h_{p_{n}p_{l}}u_{kl}.
\end{equation}
By the concavity of $h$, we have
$$-h_{p_{n}p_{n}}\geq 0, \quad h_{p_{k}}u_{kr}=\frac{\partial h(Du)}{\partial x_{r}}=0,\quad
h_{p_{k}}u_{kn}=\frac{\partial h(Du)}{\partial x_{n}}=\frac{\partial h(Du)}{\partial x_{n}}\geq 0.$$
Substituting those into (\ref{e3.10}) and using (\ref{e3.5}) it yields
\begin{equation*}
h_{p_{k}}h_{p_{l}}u_{kl}\geq -Ch_{p_{n}}+\frac{1}{C}|Dh|^{2}=-Ch_{p_{n}}+\frac{1}{C}.
\end{equation*}
According to the above last term, we distinguish two cases.

Case (i).
$$ -Ch_{p_{n}}+\frac{1}{C}\leq 0.$$
Then
$$h_{p_{k}}(Du)\nu_{k}=h_{p_{n}}\geq\frac{1}{C^{2}}.$$
It shows that there is a uniform positive lower bound
of the quantity $\min_{\partial\Omega\times[0,T]}h_{p_{k}}(Du)\nu_{k}$.

Case (ii).
$$ -Ch_{p_{n}}(x_{0})+\frac{1}{C}>0.$$
Then we obtain a positive lower bound of $h_{p_{k}}h_{p_{l}}u_{kl}$. Introduce the Legendre transformation of $u$,
\begin{equation*}
y_{i}=\frac{\partial u}{\partial x_{i}}
,\,\,i=1,2,\cdots,n,\,\,\,u^{*}(y_{1},\cdots,y_{n},t):=\sum_{i=1}^{n}x_{i}\frac{\partial u}{\partial x_{i}}-u(x,t).
\end{equation*}
In terms of $y_{1},\cdots,y_{n}, u^{*}(y_{1},\cdots,y_{n})$, one can easily check that
$$\frac{\partial^{2} u^{*}}{\partial y_{i}\partial y_{j}}=[\frac{\partial^{2} u}{\partial x_{i}\partial x_{j}}]^{-1}.$$
Since  $\arctan\lambda+\arctan\lambda^{-1}=\frac{\pi}{2}$.
Then $u^{*}$ satisfies
\begin{equation}\label{e3.12}
\left\{ \begin{aligned}\frac{\partial u^{*}}{\partial t}-F(D^{2}u^{*})&=-\frac{n\pi}{2},
& T>t>0,\quad x\in \tilde{\Omega}, \\
\tilde{h}(Du^{*})&=0,& \qquad T>t>0,\quad x\in\partial\tilde{\Omega},\\
 u^{*}&=u^{*}_{0}, & \qquad\quad t=0,\quad x\in \tilde{\Omega}.
\end{aligned} \right.
\end{equation}
where $\tilde{h}$ is a smooth and strictly concave function on $\bar{\Omega}$:
$$\Omega=\{p\in\mathbb{R}^{n} |\tilde{h}(p)>0\},\qquad |D\tilde{h}|_{{\partial\tilde{\Omega}}}=1.$$
We also define
$$\tilde{v}=\tilde{\beta}^{k}\tilde{\nu}_{k}+\tilde{h}(Du^{*})=\langle\tilde{\beta}, \tilde{\nu}\rangle+\tilde{h}(Du^{*}),$$
where$$\tilde{\beta}^{k}\triangleq\frac{\partial \tilde{h}(Du^{*})}{\partial u^{*}_{k}}=\tilde{h}_{p_{k}}(Du^{*}),$$
and  $\tilde{\nu}=(\tilde{\nu}_{1}, \tilde{\nu}_{2},\cdots,\tilde{\nu}_{n})$ is the inner unit normal
 vector of $\partial\tilde{\Omega}$.
Using the same methods, under the assumption of
\begin{equation*}\label{e3.13}
\tilde{v}_{n}(y_{0},t_{0})\geq -C,
\end{equation*}
we obtain the positive lower bounds of $\tilde{h}_{p_{k}}\tilde{h}_{p_{l}}u^{*}_{kl}$
or
$$h_{p_{k}}(Du)\nu_{k}=\tilde{h}_{p_{k}}(Du^{*})\tilde{\nu}_{k}(y_{0})=\tilde{h}_{p_{n}}\geq\frac{1}{C^{2}}.$$
We notice that
$$\tilde{h}_{p_{k}}\tilde{h}_{p_{l}}u^{*}_{kl}=\nu_{i}\nu_{j}u^{ij}.$$
Then the claim follows from (\ref{e3.0}) by the positive lower bounds of $h_{p_{k}}h_{p_{l}}u_{kl}$ and $\tilde{h}_{p_{k}}\tilde{h}_{p_{l}}u^{*}_{kl}$.

It remains to prove the key estimate (\ref{e3.8}). The proof of Lemma 8.1 in \cite{OK} can
be also adapted  here. For the convenience of readers and the completeness, we provide
the details and the arguments below.

Define the linearized operator by
$$L=g^{ij}\partial_{ij}-\partial_{t}.$$
Since $D^{2}\tilde{h}\leq-\tilde{\theta}I$,  we obtain
\begin{equation}\label{e3.151}
\aligned
L\tilde{h} &\leq  -\tilde{\theta}\sum g^{ii}.\\
\endaligned
\end{equation}
On the other hand,
\begin{equation*}\aligned
Lv=&h_{p_{k}p_{l}p_{m}}\nu_{k}g^{ij}u_{li}u_{mj}+2h_{p_{k}p_{l}}g^{ij}\nu_{kj}u_{li}
+h_{p_{k}p_{l}}g^{ij}u_{lj}u_{ki}\\
&+h_{p_{k}p_{l}}\nu_{k}Lu_{l}+h_{p_{k}}L\nu_{k}.
\endaligned
\end{equation*}
By estimating the first term in the diagonal basis, one yields
$$\mid h_{p_{k}p_{l}p_{m}}\nu_{k}g^{ij}u_{li}u_{mj}\mid\leq C\sum\frac{\lambda^{2}_{i}}{1+\lambda^{2}_{i}}\leq C,$$
where $C$ is a constant depending only on $h$ and $\Omega$. For the same reason, we have
$$\mid2h_{p_{k}p_{l}}g^{ij}\nu_{kj}u_{li}\mid\leq C,\quad \mid h_{p_{k}p_{l}}g^{ij}u_{lj}u_{ki}\mid\leq C.$$
After the simple calculation it gives
$$Lu_{l}=0.$$
Obviously we have
$$|h_{p_{k}}L\nu_{k}|\leq C\sum g^{ii}.$$
So there exists a positive constant $C$ such that
\begin{equation}\label{e3.15}
\mid Lv\mid\leq C\sum g^{ii}
\end{equation}
Here we use  Corollary \ref{c3.3} and $C$ depends  only on $h$, $\Omega$ and $u_{0}$.

Denote a neighborhood of $x_{0}$:
$$\Omega_{\delta}\triangleq \Omega\cap B_{\delta}(x_{0})$$
where $\delta$ is a positive constant such that $\nu$ is well defined in $\Omega_{\delta}$.
We consider
$$\Phi\triangleq v(x,t)-v(x_{0},t_{0})+C_{0}\tilde{h}(x)+A|x-x_{0}|^{2}$$
where $C_{0}$ and $A$ are positive constants to be determined. On $\partial\Omega\times[0,T)$
it is clear that $\Phi\geq 0.$ Since $v$ is bounded, we can select $A$
large enough such that
\begin{equation*}
\begin{aligned}
&(v(x,t)-v(x_{0},t_{0})+C_{0}\tilde{h}(x)+A|x-x_{0}|^{2})|_{(\Omega\cap\partial B_{\delta}(x_{0}))\times[0,T]}\\
 &\geq v(x,t)-v(x_{0},t_{0})-C_{0}C+A\delta^{2}\\
 &\geq 0.
\end{aligned}
\end{equation*}
Using the strictly concavity of $\tilde{h}$ we have
$$ \triangle(C_{0}\tilde{h}(x)+A|x-x_{0}|^{2})\leq C(-C_{0}\tilde{\theta}+2A)\sum g^{ii}.$$
Then by choosing the constant $C_{0}\gg A$,
we can show that
\begin{equation*}
\begin{aligned}
\triangle(v(x,0)-v(x_{0},t_{0})+C_{0}\tilde{h}(x)+A|x-x_{0}|^{2})\leq 0.
\end{aligned}
\end{equation*}
It follows from the maximum principle:
\begin{equation*}
\begin{aligned}
&(v(x,0)-v(x_{0},t_{0})+C_{0}\tilde{h}(x)+A|x-x_{0}|^{2})|_{\Omega_{\delta}}\\
&\geq\min_{(\partial\Omega\cap B_{\delta}(x_{0}))\cup(\Omega\cap\partial B_{\delta}(x_{0})} (v(x,0)-v(x_{0},t_{0})+C_{0}\tilde{h}(x)+A|x-x_{0}|^{2})
\\
&\geq 0.
\end{aligned}
\end{equation*}
Combining (\ref{e3.151}) with (\ref{e3.15}), letting $C_{0}$ be large enough  we obtain $$L\Phi\leq (-C_{0}\tilde{\theta}+C+2A)\sum g^{ii}\leq 0.$$
 From the above arguments one can verify that
$\Phi$ satisfies
\begin{equation}\label{e3.16}
\left\{ \begin{aligned}L\Phi&\leq 0,\qquad
&(x,t)\in\Omega_{\delta}\times[0,T] , \\
\Phi&\geq 0,\qquad &(x,t)\in(\partial\Omega_{\delta}\times[0,T]\cup(\Omega_{\delta}\times\{t=0\}.
\end{aligned} \right.
\end{equation}
Using the maximum principle we can deduce that $$\Phi\geq 0,\qquad (x,t)\in\Omega_{\delta}\times[0,T].$$
Combining it with $\Phi(x_{0},t_{0})=0$, we obtain $\Phi_{n}(x_{0},t_{0})\geq0$
which gives the desired estimate (\ref{e3.8}),  thus complete the proof of the lemma.
\end{proof}
It follows from ({\ref{e3.15}) that  we can state the following result which is similar to Proposition 2.6 in \cite{SM}.
\begin{lemma}\label{l3.0}
Fix a smooth function $H: \Omega\times\tilde{\Omega}\rightarrow R$ and define $\varphi(x,t)=H(x,Du(x,t))$. Then
there holds
\begin{equation*}\label{e3.170}
|L\varphi|\leq C\sum g^{ii},\quad (x,t)\in \Omega_{T},
\end{equation*}
where $C$ is a positive constant depending on $h$, $H$,  $u_{0}$ and $\Omega$.
\end{lemma}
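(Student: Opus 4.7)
The plan is to imitate the computation that led to (\ref{e3.15}), where the special case $H(x,p)=h(p)$ was treated, and extend it to an arbitrary smooth $H$. First I would apply the chain rule to $\varphi(x,t)=H(x,Du(x,t))$, obtaining
\[
\varphi_{i}=H_{x_{i}}+H_{p_{k}}u_{ki},\qquad \varphi_{t}=H_{p_{k}}u_{kt},
\]
\[
\varphi_{ij}=H_{x_{i}x_{j}}+H_{x_{i}p_{k}}u_{kj}+H_{x_{j}p_{k}}u_{ki}+H_{p_{k}p_{l}}u_{ki}u_{lj}+H_{p_{k}}u_{kij}.
\]
Multiplying by $g^{ij}$ and subtracting $\varphi_{t}$ collects every occurrence of third-order derivatives of $u$ into a single factor:
\[
L\varphi=g^{ij}\bigl(H_{x_{i}x_{j}}+H_{x_{i}p_{k}}u_{kj}+H_{x_{j}p_{k}}u_{ki}+H_{p_{k}p_{l}}u_{ki}u_{lj}\bigr)+H_{p_{k}}\bigl(g^{ij}u_{kij}-u_{kt}\bigr).
\]

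The decisive point is that the final bracket is $Lu_{k}$ and vanishes: differentiating the PDE $u_{t}=F(D^{2}u)$ in $x_{k}$ and using $F^{ij}(D^{2}u)=g^{ij}$ yields $u_{kt}=g^{ij}u_{ijk}$. This is exactly the identity $Lu_{l}=Lu_{k}=0$ already recorded in the proof of Lemma \ref{l3.4}, and it eliminates all third derivatives of $u$ from $L\varphi$.

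To bound the four surviving terms, at each fixed $(x,t)$ I would diagonalize $D^{2}u$, so that $u_{ij}=\lambda_{i}\delta_{ij}$ and $g^{ij}=(1+\lambda_{i}^{2})^{-1}\delta_{ij}$. The contributions reduce to sums of the form
\[
\sum_{i}\frac{H_{x_{i}x_{i}}}{1+\lambda_{i}^{2}},\qquad \sum_{i}\frac{\lambda_{i}\,H_{x_{i}p_{i}}}{1+\lambda_{i}^{2}},\qquad \sum_{i}\frac{\lambda_{i}^{2}\,H_{p_{i}p_{i}}}{1+\lambda_{i}^{2}}.
\]
Since $Du$ takes values in the bounded set $\tilde{\Omega}$, the derivatives of $H$ up to order two are uniformly bounded in terms of $H$, $u_{0}$ and $\Omega$; since $u$ is convex the quantities $(1+\lambda_{i}^{2})^{-1}$, $\lambda_{i}(1+\lambda_{i}^{2})^{-1}$ and $\lambda_{i}^{2}(1+\lambda_{i}^{2})^{-1}$ all lie in $[0,1]$. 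This gives $|L\varphi|\leq C$. Corollary \ref{c3.3} furnishes the strictly positive lower bound $\sum g^{ii}\geq (1+\tan^{2}(\Theta_{0}/n))^{-1}$, which lets me absorb the constant and rewrite the estimate as $|L\varphi|\leq C\sum g^{ii}$.

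I do not anticipate any real obstacle. The only delicate point is the cancellation of the third-order terms, which rests on the linearization identity $Lu_{k}=0$ — a structural feature of the flow $u_{t}=F(D^{2}u)$ together with the fact that $F^{ij}=g^{ij}$, rather than of the particular form of $F$. Once that cancellation is in place, the remaining estimates are routine and rely only on the uniform ellipticity already provided by Corollary \ref{c3.3}.
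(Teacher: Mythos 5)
Your proof is correct and follows essentially the approach the paper implicitly invokes by pointing to the computation behind (\ref{e3.15}): you differentiate $\varphi$ twice, use the linearized identity $Lu_k=0$ (from differentiating $u_t=F(D^2u)$, noting $F^{ij}=g^{ij}$) to cancel all third-order derivatives, diagonalize $D^2u$ (legitimate since $g$ and $D^2u$ commute) so the surviving terms carry factors of $(1+\lambda_i^2)^{-1}$, $\lambda_i(1+\lambda_i^2)^{-1}$, $\lambda_i^2(1+\lambda_i^2)^{-1}$, all in $[0,1]$, and finally convert $|L\varphi|\le C$ into $|L\varphi|\le C\sum g^{ii}$ via the uniform lower bound in Corollary \ref{c3.3}. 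This is exactly the argument that produced (\ref{e3.15}), generalized from $H(x,p)=h(p)$ and $\langle\beta,\nu\rangle$ to arbitrary smooth $H$.
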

We can now proceed to do the $C^{2}$  estimates. The strategy is to
 bound the interior second derivative firstly.
\begin{lemma}\label{l3.5}
For each $t\in [0,T]$,  the following estimates hold:
\begin{equation}\label{e3.170}
\sup_{\Omega}\mid D^{2}u\mid\leq \max_{\partial\Omega\times[0,T]}\mid D^{2}u\mid+\max_{\bar{\Omega}}\mid D^{2}u_{0}\mid.
\end{equation}
\end{lemma}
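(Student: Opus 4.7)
The idea is the standard interior second derivative estimate via the parabolic maximum principle applied to $w = u_{\xi\xi}$ for a fixed unit direction $\xi \in \mathbb{R}^n$. First, I would differentiate the evolution equation $u_t = F(D^2 u)$ twice in the direction $\xi$ to get
$$\frac{\partial}{\partial t} u_{\xi\xi} - g^{ij} (u_{\xi\xi})_{ij} = F^{ij,kl}(D^2 u)\, u_{ij\xi}\, u_{kl\xi},$$
where $F^{ij,kl}$ denotes the second derivative of $F$ with respect to the Hessian entries.

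The crucial structural input is the concavity of the operator $F(D^2 u) = \sum_{i=1}^n \arctan \lambda_i$ on the cone of non-negative symmetric matrices. By Lemma \ref{l3.2}, the strictly convex solution $u$ takes values in this cone, so it is legitimate to invoke the concavity. Since $\arctan$ is concave on $[0,\infty)$ and the map sending a symmetric matrix to its ordered vector of eigenvalues produces a symmetric concave function of the eigenvalues, the composition $F$ is concave as a function on positive semidefinite symmetric matrices. This implies that $F^{ij,kl}$ is negative semidefinite as a quadratic form on symmetric matrices, so the right-hand side above is $\leq 0$. Consequently $w = u_{\xi\xi}$ is a subsolution:
$$\frac{\partial w}{\partial t} - g^{ij} w_{ij} \leq 0 \quad \text{in } \Omega_T.$$

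With the subsolution property in hand, the parabolic weak maximum principle gives
$$\max_{\bar\Omega \times [0,T]} u_{\xi\xi} \leq \max\Big\{ \max_{\partial\Omega \times [0,T]} u_{\xi\xi},\ \max_{\bar\Omega} (u_0)_{\xi\xi} \Big\}.$$
Since $u$ is convex, $D^2 u \geq 0$, so the operator norm (or any equivalent norm) of $D^2 u$ is controlled by $\sup_{|\xi|=1} u_{\xi\xi}$. Taking the supremum over all unit $\xi$ and then bounding the right-hand side by the sum rather than the max (both terms are non-negative) yields
$$\sup_{\Omega} |D^2 u| \leq \max_{\partial\Omega \times [0,T]} |D^2 u| + \max_{\bar\Omega} |D^2 u_0|,$$
for every $t \in [0,T]$.

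The main obstacle, and really the only nontrivial input, is the concavity of $F$ on the positive cone: once this is secured the argument is a one-line maximum principle application. Note that on the full space of symmetric matrices $F$ is not globally concave (it is only concave on the positive cone), which is why Lemma \ref{l3.2} and the preservation of convexity along the flow are essential here; without them the key sign of the second-order term in the differentiated equation would fail.
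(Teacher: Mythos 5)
Your proof is correct and follows essentially the same route as the paper: differentiate the equation twice in a fixed direction $\xi$, use the concavity of $F$ to conclude that $u_{\xi\xi}$ is a subsolution of the linearized equation, apply the parabolic maximum principle to bound $u_{\xi\xi}$ by its values on the parabolic boundary, and use convexity of $u$ to pass from $\sup_{|\xi|=1}u_{\xi\xi}$ to $|D^2u|$. Your write-up is in fact more careful than the paper's one-line invocation of ``the concavity of $F$'': you correctly point out that $F=\sum\arctan\lambda_i$ is \emph{not} globally concave on symmetric matrices, but only on the cone $D^2u\ge 0$, so the preservation of convexity along the flow (Lemma \ref{l3.2}) is precisely what makes the sign of $F^{ij,kl}u_{ij\xi}u_{kl\xi}$ nonpositive and the maximum-principle argument legitimate.
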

\begin{proof}
Given any unit vector $\xi$,  by the concavity of F, $u_{\xi\xi}$ satisfies
$$\partial_{t}u_{\xi\xi}-g^{ij}\partial_{ij}u_{\xi\xi}
=\frac{\partial^{2}F}{\partial u_{ij}\partial u_{kl}}u_{ij\xi}u_{kl\xi}\leq 0.$$
Combining with the convexity of $u$, and using the maximum principle we obtain
\begin{equation*}
\begin{aligned}
0\leq |u_{\xi\xi}|=u_{\xi\xi}(x,t)&\leq \max_{\partial\Omega_{T}}u_{\xi\xi}\\
 &\leq\max_{\partial\Omega\times[0,T]}\mid D^{2}u\mid+\max_{\bar{\Omega}}\mid D^{2}u_{0}\mid.
\end{aligned}
\end{equation*}
Therefore the estimates (\ref{e3.170}) are satisfied.
\end{proof}
By tangentially differentiating the boundary
condition $h(Du)=0$ we have some second order derivative bounds on $\partial\Omega$, i.e,
\begin{equation}\label{e3.17}
u_{\beta\tau}=h_{p_{k}}(Du)u_{k\tau}=0.
\end{equation}
where $\tau$  denotes a tangential  vector.  The second order derivative estimates on
the boundary are controlled by $u_{\beta\tau}, u_{\beta\beta}, u_{\tau\tau}$.

In the following
 we give the arguments as in \cite{JU}. For $x\in\partial\Omega$, any unit vector $\xi$ can be written in terms of
 a tangential component $\tau(\xi)$ and a component in the direction $\beta$ by
 $$\xi=\tau(\xi)+\frac{\langle\nu,\xi\rangle}{\langle\beta,\nu\rangle}\beta,$$
 where
 $$\tau(\xi)=\xi-\langle\nu,\xi\rangle\nu-\frac{\langle\nu,\xi\rangle}{\langle\beta,\nu\rangle}\beta^{T}$$
 and
 $$\beta^{T}=\beta-\langle\beta,\nu\rangle\nu.$$
After a simple computation it yields
\begin{equation}\label{e3.19}
\begin{aligned}
|\tau(\xi)|^{2}&=1-(1-\frac{|\beta^{T}|^{2}}{\langle\beta,\nu\rangle^{2}})\langle\nu,\xi\rangle^{2}
-2\langle\nu,\xi\rangle\frac{\langle\beta^{T},\xi\rangle}{\langle\beta,\nu\rangle}\\
&\leq 1+C\langle\nu,\xi\rangle^{2}-2\langle\nu,\xi\rangle\frac{\langle\beta^{T},\xi\rangle}{\langle\beta,\nu\rangle}\\
&\leq C,
\end{aligned}
\end{equation}
where we use the strict obliqueness (\ref{e3.4}). Let $\tau\triangleq \frac{\tau(\xi)}{|\tau(\xi)|}$.
Then by (\ref{e3.17}) and (\ref{e3.4}), we obtain
\begin{equation}\label{e3.20}
\begin{aligned}
u_{\xi\xi}&=|\tau(\xi)|^{2}u_{\tau\tau}+2|\tau(\xi)|\frac{\langle\nu,\xi\rangle}{\langle\beta,\nu\rangle}u_{\beta\tau}+
\frac{\langle\nu,\xi\rangle^{2}}{\langle\beta,\nu\rangle^{2}}
u_{\beta\beta}\\
&=|\tau(\xi)|^{2}u_{\tau\tau}+\frac{\langle\nu,\xi\rangle^{2}}{\langle\beta,\nu\rangle^{2}}
u_{\beta\beta}\\
&\leq C(u_{\tau\tau}+u_{\beta\beta}).
\end{aligned}
\end{equation}
Along with specifying the boundary conditions
we can carry out the double derivative estimates in the direction $\beta$.
\begin{lemma}\label{l3.6}
For each $t\in [0,T]$, we have the estimates
 $$\max_{\partial\Omega} u_{\beta\beta}\leq C_{2}$$
 where $C_{2}>0$ depending only on $u_{0}$, $h$, $\tilde{h}$, $\Omega$.
\end{lemma}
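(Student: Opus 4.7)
The strategy is a Hopf-type barrier argument at the boundary maximum of $u_{\beta\beta}$, parallel to Lemma \ref{l3.4} but now applied to a second-order quantity. Let $M:=\max_{\partial\Omega\times[0,T]}u_{\beta\beta}$, attained at some $(x_0,t_0)$. If $t_0=0$ the bound follows from the initial data, so I assume $t_0>0$ and rotate coordinates so that $\nu(x_0)=e_n$, extending $\nu$ smoothly to a tubular neighborhood as in (\ref{e3.5}). The natural candidate to push into the interior is
$$w(x,t):=h_{p_k}(Du(x,t))\,h_{p_l}(Du(x,t))\,u_{kl}(x,t),$$
which reduces to $u_{\beta\beta}$ on $\partial\Omega$ and satisfies $w(x_0,t_0)=M$.

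Next I would compute $Lw=g^{ij}w_{ij}-w_t$ by differentiating $\dot u=F(D^2u)$ twice and commuting derivatives. The top-order terms collapse via the identity $g^{ij}u_{ijkl}=u_{klt}-F^{ij,rs}u_{ijk}u_{rsl}$; the concavity of $F$ on the cone of positive symmetric matrices then forces $-h_{p_k}h_{p_l}F^{ij,rs}u_{ijk}u_{rsl}\ge 0$, while the lower-order remainder (sums of $h_{pp}$ and $h_{ppp}$ against products of $u_{ij}$'s) is controlled by Corollary \ref{c3.3} together with Lemma \ref{l3.0} applied with $H(x,p)=h_{p_k}(p)h_{p_l}(p)$. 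The outcome is an estimate of the form
$$Lw \ge -C(1+|D^2u|)\sum_{i=1}^n g^{ii},$$
with $C$ depending only on $h$ and the ellipticity constants from Corollary \ref{c3.3}.

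With this in hand I would form the barrier
$$\Phi(x,t):=w(x,t)-M+C_0\tilde h(x)+A|x-x_0|^2$$
on $\Omega_\delta:=\Omega\cap B_\delta(x_0)$, exactly as in Lemma \ref{l3.4}. On $\partial\Omega\cap B_\delta(x_0)$ one has $w\le M$ and the correction terms are nonnegative; on $\Omega\cap\partial B_\delta(x_0)$, a sufficiently large $A$ absorbs the bounded oscillation of $w$; and at $t=0$ the initial data handles it. Using the strict concavity $L\tilde h\le -\tilde\theta\sum g^{ii}$ from (\ref{e3.151}) together with $L|x-x_0|^2=2\sum g^{ii}$, choosing $C_0\gg A$ yields $L\Phi\le 0$ on $\Omega_\delta\times[0,T]$. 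The maximum principle then gives $\Phi\ge 0$, and since $\Phi(x_0,t_0)=0$, the inward normal derivative satisfies $\Phi_n(x_0,t_0)\ge 0$. Expanding this inequality, using the tangential identity $u_{\beta\tau\tau}=-h_{p_kp_l}u_{k\tau}u_{l\tau}$ obtained by differentiating $h(Du)=0$ twice tangentially, together with the strict obliqueness $\langle\beta,\nu\rangle\ge 1/C_1$ from Lemma \ref{l3.4}, one reads off $u_{\beta\beta}(x_0,t_0)\le C_2$.

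The main obstacle is the term $|D^2u|\sum g^{ii}$ in the bound on $Lw$: a priori $|D^2u|$ involves precisely the quantity $M$ one is trying to estimate, so a naive barrier argument is circular. The standard remedy is to renormalize, e.g.\ to work with $\log w$ or with $w-K|Du|^2$ for a large constant $K$, using Corollary \ref{c3.3} and the fact that $Du$ maps into the fixed set $\tilde\Omega$ to absorb the offending term into a favorable $\sum g^{ii}$ contribution. Carrying this through, while preserving the sign gains from the concavity of $F$, is the technical heart of the estimate; this is precisely where the special Lagrangian structure forces a departure from the Monge-Ampère treatment of \cite{JU}, in which $u_{\beta\beta}$ is instead recovered algebraically from the equation once the tangential block of $D^2u$ is controlled.
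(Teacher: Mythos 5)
Your approach is on the wrong track, and the difficulty you correctly identify at the end (the circularity coming from the $|D^2u|\sum g^{ii}$ term in $Lw$, which would force $C_0$ to depend on the quantity being estimated) is precisely the obstruction the paper sidesteps by choosing a \emph{different} barrier. The key observation you miss is that the $\beta$-derivative of the boundary function itself already equals the quantity you want:
$$\partial_\beta\bigl(h(Du)\bigr)=h_{p_k}(Du)\,u_{k\beta}=\beta^k u_{k\beta}=u_{\beta\beta}.$$
So one should not push the second-order quantity $w=h_{p_k}h_{p_l}u_{kl}$ into the interior; one should push the first-order quantity $h(Du)$, which vanishes identically on $\partial\Omega$. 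The paper takes
$$\Psi:=\pm h(Du)+C_0\tilde h+A|x-x_0|^2$$
on $\Omega_\delta\times[0,T]$. Since $h(Du)\geq0$ and $\tilde h\geq0$ on $\Omega$, the parabolic boundary terms are nonnegative once $A$ is large, and $\Psi\equiv0$ on $\partial\Omega\cap B_\delta(x_0)$. Crucially, $|L\,h(Du)|\leq C\sum g^{ii}$ by Lemma \ref{l3.0} (apply it with $H(x,p)=h(p)$), with $C$ depending only on $h$, $u_0$, $\Omega$ through Corollary \ref{c3.3} --- \emph{not} on $|D^2u|$. Combined with $L\tilde h\leq-\tilde\theta\sum g^{ii}$, the choice $C_0\gg A+C$ gives $L\Psi\leq0$ with constants independent of the unknown second derivatives. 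Then $\Psi\geq0$, $\Psi(x_0,t_0)=0$, and strict obliqueness (Lemma \ref{l3.4}) give $\Psi_\beta(x_0,t_0)\geq0$, which upon expanding is $\mp u_{\beta\beta}(x_0,t_0)\leq C_0\,\tilde h_\beta(x_0)\leq C_2$. No third-derivative computation, no renormalization, no circularity.

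Two further problems with the proposal as written. First, with $\Phi=w-M+C_0\tilde h+A|x-x_0|^2$ you do not have $\Phi\geq0$ on $\partial\Omega\cap B_\delta(x_0)$, since there $w-M\leq0$ and $\tilde h=0$; the sign is reversed, so the comparison you invoke fails (you would need $M-w$). Second, and more importantly, even if the barrier for $w$ were set up correctly it would only yield a bound on the normal derivative $w_n(x_0,t_0)$, i.e. on a third derivative of $u$; bounding $w=u_{\beta\beta}$ itself from that requires an additional closing identity. The tangential relation $u_{\beta\tau\tau}=-h_{p_kp_l}u_{k\tau}u_{l\tau}$ you cite is the ingredient used in Lemma \ref{l3.7} to close the loop for $u_{\tau\tau}$, where the concavity of $h$ produces a quadratic term $\tilde Cu_{11}^2$ to compare against a linear bound; there is no analogous mechanism for $u_{\beta\beta}$, and your proposal does not supply one. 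Finally, your concluding remark that the special Lagrangian structure forces a departure from the Monge--Amp\`{e}re treatment of \cite{JU} is not accurate for this particular step: the barrier $\pm h(Du)$ works identically there, and the only equation-specific input is the uniform-ellipticity bound of Corollary \ref{c3.3} feeding into Lemma \ref{l3.0}.
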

\begin{proof}
We use the  barrier functions  for any $x_{0}\in\partial\Omega$ and thus
consider
$$\Psi\triangleq\pm h(Du)+C_{0}\tilde{h}+A|x-x_{0}|^{2}.$$
 As  the proof of (\ref{e3.16}),
we can find the constant $C_{0}$ and  $A$,  such that we have
\begin{equation*}\label{e3.21}
\left\{ \begin{aligned}L\Psi&\leq 0,\qquad
&(x,t)\in\Omega_{\delta}\times[0,T] , \\
\Psi&\geq 0,\qquad &(x,t)\in(\partial\Omega_{\delta}\times[0,T]\cup(\Omega_{\delta}\times\{t=0\}.
\end{aligned} \right.
\end{equation*}
By  the maximum principle  we get
$$\Psi\geq 0,\qquad (x,t)\in\Omega_{\delta}\times[0,T].$$
Combining it with $\Psi(x_{0},t_{0})=0$  and using Lemma \ref{l3.4} we obtain $\Psi_{\beta}(x_{0},t_{0})\geq 0$.
Furthermore we see from $\beta=(\frac{\partial h}{\partial p_{1}}, \frac{\partial h}{\partial p_{2}}, \cdots\frac{\partial h}{\partial p_{n}})$ that
$$\frac{\partial h}{\partial\beta}=\langle Dh(Du),\beta\rangle=\Sigma_{k,l}\frac{\partial h}{\partial p_{k}}u_{kl}\beta^{l}
=\Sigma_{k,l}\beta^{k}u_{kl}\beta^{l}=u_{\beta\beta}.$$
Then it shows that
$$|u_{\beta\beta}|=|\frac{\partial h}{\partial\beta}|\leq C_{2}.$$
\end{proof}
We shall obtain  the bound of double tangential derivative at the boundary.
\begin{lemma}\label{l3.7}
There exists a constant $C_{3}>0$ depending only on $u_{0}$, $h$, $\tilde{h}$, $\Omega$ such that
 $$\max_{\partial\Omega\times[0,T]}\max_{|\tau|=1, \langle\tau,\nu\rangle=0} u_{\tau\tau}\leq C_{3}.$$
\end{lemma}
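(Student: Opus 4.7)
The strategy is to combine the Legendre dual estimate with a barrier argument in the spirit of Lemmas \ref{l3.4} and \ref{l3.6}, where the quantity to be controlled on $\partial\Omega$ is now $u_{\tau\tau}$ for a unit tangent $\tau$.

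First I would apply Lemma \ref{l3.6} to the dual problem (\ref{e3.12}) for $u^*$. At a dual boundary point $y_{0}=Du(x_{0})\in\partial\tilde\Omega$ one has $\tilde\beta(y_{0})=D\tilde h(Du^*(y_{0}))=D\tilde h(x_{0})=\nu(x_{0})$, and since $[D^{2}u^*(y_{0})]^{-1}=D^{2}u(x_{0})$ the dual bound on $u^*_{\tilde\beta\tilde\beta}$ translates into
$$u^{ij}(x_{0})\,\nu_{i}(x_{0})\,\nu_{j}(x_{0})\leq C\quad\text{on }\partial\Omega.$$
Cauchy--Schwarz then forces $u_{\nu\nu}(x_{0})\geq 1/C$ on $\partial\Omega$, a positive lower bound in the normal direction that will be used to control the non-tangential part of the barrier.

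Next, pick $x_{0}\in\partial\Omega$, $t_{0}\in[0,T]$ and a unit tangent $\tau_{0}$ at $x_{0}$ where $u_{\tau\tau}$ attains its maximum over $\partial\Omega\times[0,T]$ and over tangential unit vectors. Extend $\tau_{0}$ smoothly to a unit vector field $\tau(x)$ on $\Omega_{\delta}=\Omega\cap B_{\delta}(x_{0})$ with $\tau(x)\perp\nu(x)$ on $\partial\Omega$, and put $w(x,t)=u_{ij}(x,t)\tau^{i}(x)\tau^{j}(x)$. Consider
$$\Phi(x,t)=w(x,t)-w(x_{0},t_{0})+C_{0}\tilde h(x)+A|x-x_{0}|^{2}.$$
By concavity of $F$ one has $L(u_{\xi\xi})\leq 0$ for constant $\xi$; the additional terms arising from the $x$-dependence of $\tau$ are handled by grouping indices so that $g^{kl}u_{klj}=\partial_{j}\dot{u}$ is recognised from the evolution equation and bounded via Lemma \ref{l3.1}, in the style of (\ref{e3.15}) and Lemma \ref{l3.0}. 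This yields $|Lw|\leq C\sum g^{ii}$. Together with the strictly concave contribution $L\tilde h\leq -\tilde\theta\sum g^{ii}$ from (\ref{e3.151}) and Corollary \ref{c3.3}, choosing $C_{0}\gg A\gg 1$ ensures $L\Phi\leq 0$ on $\Omega_{\delta}\times[0,T]$.

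It remains to verify $\Phi\geq 0$ on the parabolic boundary. On $\Omega\cap\partial B_{\delta}(x_{0})$ and at $t=0$ this is obtained by taking $A$ large relative to Lemma \ref{l3.5} and the initial data. On $\partial\Omega\cap B_{\delta}(x_{0})$, the maximality of $(x_{0},t_{0},\tau_{0})$ gives $u_{\tau(x)\tau(x)}(x,t)\leq w(x_{0},t_{0})$; to propagate this to $w$ defined with the extended field $\tau(x)$, one uses the tangential identity (\ref{e3.17}) $u_{\beta\tau}=0$ and the dual lower bound $u_{\nu\nu}\geq 1/C$ to absorb the non-tangential corrections. The maximum principle then yields $\Phi\geq 0$ on $\Omega_{\delta}\times[0,T]$, and $\Phi(x_{0},t_{0})=0$ forces $\Phi_{\beta}(x_{0},t_{0})\geq 0$. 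Unpacking this with strict obliqueness (\ref{e3.4}) produces the sought bound $u_{\tau_{0}\tau_{0}}(x_{0},t_{0})\leq C_{3}$, and since $(x_{0},t_{0},\tau_{0})$ was a maximiser, the conclusion follows.

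The main obstacle I anticipate is the verification $|Lw|\leq C\sum g^{ii}$: expanding $L(u_{ij}\tau^{i}\tau^{j})$ produces terms $g^{kl}u_{ijk}(\tau^{i}_{l}\tau^{j}+\tau^{i}\tau^{j}_{l})$ involving third derivatives of $u$ that are not a priori controlled. The index contractions must be arranged so that one of them becomes $g^{kl}u_{klj}=\partial_{j}\dot{u}$, reducing matters to Lemma \ref{l3.1}. A secondary technicality is the boundary step above, where $\tau(x)$ is tangential but need not be maximising at nearby points; the comparison to the pointwise tangential maximum is exactly what the dual estimate on $u^{ij}\nu_{i}\nu_{j}$ and the identity (\ref{e3.17}) are designed to supply.
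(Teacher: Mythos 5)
Your plan diverges from the paper's in a way that creates two genuine gaps, both tied to the decision to run the barrier argument on $w(x,t)=u_{ij}(x,t)\tau^i(x)\tau^j(x)$ for a variable extension $\tau(x)$ of the maximising tangent.

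First, the interior inequality $|Lw|\leq C\sum g^{ii}$ is not available. Expanding $L(u_{ij}\tau^i\tau^j)$ gives, besides the sign-favorable piece $-F^{kl,rs}u_{kl\tau}u_{rs\tau}\leq 0$ from concavity, mixed terms of the form $g^{kl}u_{ijk}\,\tau^i_l\tau^j$. You correctly flagged these as the obstacle, but the proposed rescue does not apply: that contraction ties only one index of the third derivative to $g$, whereas $g^{kl}u_{klj}=\partial_j\dot u$ requires contracting both indices of $g$ against the third derivative. The index pattern simply does not match, and there is no a priori bound on $D^3u$ at this stage, so the term is uncontrolled. The paper sidesteps this entirely: it never introduces a variable vector field. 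It works with the \emph{constant} direction $e_1$ (so $L(u_{11})=-F^{ij,kl}u_{ij1}u_{kl1}\geq 0$ is clean), and the tangency requirement is imposed not through a field but through the pointwise algebraic decomposition of $e_1$ at each boundary point into tangential and $\beta$ parts, i.e.\ (\ref{e3.19})--(\ref{e3.20}), producing the inequality
$\frac{u_{11}}{u_{11}(x_0,t_0)}+2\langle\nu,e_1\rangle\frac{\langle\beta^T,e_1\rangle}{\langle\beta,\nu\rangle}\leq 1+C\langle\nu,e_1\rangle^2$ on all of $\partial\Omega$.

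Second, the boundary nonnegativity of your $\Phi=w-w(x_0,t_0)+C_0\tilde h+A|x-x_0|^2$ is not established, and in fact goes the wrong way. Maximality gives $w(x,t)=u_{\tau(x)\tau(x)}(x,t)\leq w(x_0,t_0)$ on $\partial\Omega$, hence $w-w(x_0,t_0)\leq 0$ there; to get $\Phi\geq 0$ on $\partial\Omega\cap B_\delta(x_0)$ you would need $A|x-x_0|^2\geq w(x_0,t_0)-w(x,t)$, which is a Taylor-type estimate on $w$ along the boundary and hence again a third-derivative bound on $u$ you do not have. The paper's barrier is built so that the boundary inequality is \emph{exactly} the algebraic inequality above: it defines $w=A|x-x_0|^2-\frac{u_{11}}{u_{11}(x_0,t_0)}-2\langle\nu,e_1\rangle\frac{\langle\beta^T,e_1\rangle}{\langle\beta,\nu\rangle}+C\langle\nu,e_1\rangle^2+1$, which is manifestly $\geq 0$ on $\partial\Omega$ and vanishes at $(x_0,t_0)$. (Note also the normalisation by $u_{11}(x_0,t_0)$ and the minus sign on $u_{11}$ — this is what makes the Hopf conclusion produce the upper bound $u_{11\beta}\leq Cu_{11}(x_0,t_0)$, which then pairs with the concavity-of-$h$ lower bound $u_{11\beta}\geq\tilde Cu_{11}(x_0,t_0)^2$ to close the argument.) The opening use of the Legendre dual to obtain $u^{ij}\nu_i\nu_j\leq C$ and $u_{\nu\nu}\geq 1/C$ is not needed for this lemma and does not repair either gap; the paper reserves the duality for Lemma \ref{l3.10}.
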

\begin{proof}
Assume that $x_{0}\in\partial\Omega$, $t_{0}\in[0,T]$  and  $\nu=e_{n}$ to be the inner unit normal of $\partial\Omega$
at $x_{0}$. Such that
$$\max_{\partial\Omega\times[0,T]}\max_{|\tau|=1, \langle\tau,\nu\rangle=0} u_{\tau\tau}=u_{11}(x_{0},t_{0}).$$
For any $x\in\partial\Omega$, combining (\ref{e3.19}) with (\ref{e3.20}), we have
\begin{equation*}
\begin{aligned}
u_{\xi\xi}&=|\tau(\xi)|^{2}u_{\tau\tau}+\frac{\langle\nu,\xi\rangle}{\langle\beta,\nu\rangle}^{2}u_{\beta\beta}\\
&\leq (1+C\langle\nu,\xi\rangle^{2}-2\langle\nu,\xi\rangle\frac{\langle\beta^{T},\xi\rangle}{\langle\beta,\nu\rangle})u_{\tau\tau}
+\frac{\langle\nu,\xi\rangle^{2}}{\langle\beta,\nu\rangle^{2}}
u_{\beta\beta}\\
&\leq(1+C\langle\nu,\xi\rangle^{2}-2\langle\nu,\xi\rangle\frac{\langle\beta^{T},\xi\rangle}{\langle\beta,\nu\rangle})u_{11}(x_{0},x_{0})
+\frac{\langle\nu,\xi\rangle^{2}}{\langle\beta,\nu\rangle^{2}}u_{\beta\beta}
\end{aligned}
\end{equation*}
Without loss of generality, we assume that $u_{11}(x_{0},t_{0})\geq 1$, then by Lemma \ref{l3.4} and Lemma \ref{l3.6}
 we get
\begin{equation*}
\frac{u_{\xi\xi}}{u_{11}(x_{0},t_{0})}+2\langle\nu,\xi\rangle\frac{\langle\beta^{T},\xi\rangle}{\langle\beta,\nu\rangle}\leq 1+C\langle\nu,\xi\rangle^{2}
\end{equation*}
Let  $\xi=e_{1}$, then we have
\begin{equation*}
\frac{u_{11}}{u_{11}(x_{0},t_{0})}+2\langle\nu,e_{1}\rangle\frac{\langle\beta^{T},e_{1}\rangle}{\langle\beta,\nu\rangle}\leq 1+C\langle\nu,e_{1}\rangle^{2}
\end{equation*}
We see that the function
\begin{equation*}
w\triangleq A|x-x_{0}|^{2}-\frac{u_{11}}{u_{11}(x_{0},t_{0})}-2\langle\nu,e_{1}\rangle\frac{\langle\beta^{T},e_{1}\rangle}{\langle\beta,\nu\rangle}
+C\langle\nu,e_{1}\rangle^{2}+1
\end{equation*}
satisfies
$$w|_{\partial\Omega\times[0,T]}\geq 0, \quad w(x_{0},t_{0})=0.$$
As before, by (\ref{e3.170}) we can select the constant $A$  such that
$$w|_{(\partial B_{\delta}(x_{0})\cap\Omega)\times[0,T]}\geq 0.$$
Consider
$$-2\langle\nu,e_{1}\rangle\frac{\langle\beta^{T},e_{1}\rangle}{\langle\beta,\nu\rangle}+C\langle\nu,e_{1}\rangle^{2}+1$$
as a known function depending on $x$ and $Du$. Then by Lemma \ref{l3.0} we obtain
$$|L(-2\langle\nu,e_{1}\rangle\frac{\langle\beta^{T},e_{1}\rangle}{\langle\beta,\nu\rangle}
+C\langle\nu,e_{1}\rangle^{2}+1)|\leq C\sum g^{ii}.$$
Combining it with the proof of Lemma \ref{l3.5}  we have
$$Lw\leq C\sum g^{ii}.$$
As in the proof of Lemma \ref{l3.6}, we consider the function
$$\Upsilon\triangleq  w+C_{0}\tilde{h}.$$
A standard barrier argument shows that $$\Upsilon_{\beta}(x_{0},t_{0})\geq 0.$$
 A direct computation yields
\begin{equation}\label{e3.22}
u_{11\beta}\leq Cu_{11}(x_{0},t_{0}).
\end{equation}
On the other hand, differentiating $h(Du)$ twice in the direction $e_{1}$ at $(x_{0},t_{0}),$
 we have

$$h_{p_{k}}u_{k11}+h_{p_{k}p_{l}}u_{k1}u_{l1}=0.$$
The concavity of $h$ yields
$$h_{p_{k}}u_{k11}=-h_{p_{k}p_{l}}u_{k1}u_{l1}\geq \tilde{C}u_{11}(x_{0},t_{0})^{2}.$$
Combining it with $h_{p_{k}}u_{k11}=u_{11\beta}$, and using (\ref{e3.22}) we obtain
\begin{equation*}\label{e3.23}
\tilde{C}u_{11}(x_{0},t_{0})^{2}\leq C u_{11}(x_{0},t_{0})
\end{equation*}
Then we get the upper bound of $u_{11}(x_{0},t_{0})$ and the desired result follows.
\end{proof}
Using Lemma {\ref{l3.6}, {\ref{l3.7}, and (\ref{e3.20}), we obtain the $C^{2}$ a priori bound on the  boundary:
\begin{lemma}\label{l3.8}
There exists a constant $C_{4}>0$ depending on $h$, $\tilde{h}$, $u_{0}$ and $\Omega$, such that
\begin{equation*}\label{e3.24}
\sup_{\partial\Omega_{T}}|D^{2}u|\leq C_{4}.
\end{equation*}
\end{lemma}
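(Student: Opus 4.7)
The plan is to reduce the full Hessian bound on $\partial\Omega_T$ to the two ingredients already established, namely the normal--normal bound of Lemma \ref{l3.6} and the tangential--tangential bound of Lemma \ref{l3.7}, by invoking the algebraic decomposition carried out in formulas (\ref{e3.19})--(\ref{e3.20}). Concretely, for an arbitrary boundary point $x_0\in\partial\Omega$ and an arbitrary unit vector $\xi\in\mathbb R^n$, I would write
$$\xi=\tau(\xi)+\frac{\langle\nu,\xi\rangle}{\langle\beta,\nu\rangle}\,\beta,$$
exactly as in the discussion preceding (\ref{e3.19}), and then use the boundary identity $u_{\beta\tau}=0$ from (\ref{e3.17}) together with the strict obliqueness estimate (\ref{e3.4}) to get the pointwise inequality
$$u_{\xi\xi}\leq C\bigl(u_{\tau\tau}+u_{\beta\beta}\bigr),$$
which is precisely (\ref{e3.20}).

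Next I would plug in the two boundary estimates already proved. Lemma \ref{l3.6} gives $u_{\beta\beta}\leq C_2$ on $\partial\Omega\times[0,T]$, and Lemma \ref{l3.7} gives $u_{\tau\tau}\leq C_3$ for every unit tangential $\tau$; combining these with the inequality above yields
$$u_{\xi\xi}(x,t)\leq C(C_2+C_3)\quad\text{for every unit }\xi\text{ and every }(x,t)\in\partial\Omega\times[0,T].$$

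Finally, since the solution $u$ is strictly convex in $x$, the Hessian $D^2u$ is a positive definite symmetric matrix, so its operator norm equals its largest eigenvalue, which in turn equals $\sup_{|\xi|=1}u_{\xi\xi}$. Taking the supremum of the previous inequality over unit vectors $\xi$ gives
$$\sup_{\partial\Omega_T}|D^2u|\leq C_4$$
with $C_4$ depending only on the data $h,\tilde h,u_0,\Omega$ entering the constants $C_2,C_3$ and the obliqueness constant $C_1$ of Lemma \ref{l3.4}.

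There is no genuine obstacle in this step; the real work has already been absorbed into the previous lemmas. The only point deserving a little care is making sure that the constant $C$ appearing in (\ref{e3.20}) is uniform on $\partial\Omega\times[0,T]$, but this is automatic since it depends only on $|\beta|$ and on the lower bound $\langle\beta,\nu\rangle\geq 1/C_1$ furnished by Lemma \ref{l3.4}, both of which are $t$-independent.
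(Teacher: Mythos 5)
Your argument coincides with the paper's, which proves Lemma \ref{l3.8} exactly by combining the decomposition inequality (\ref{e3.20}) with the boundary bounds of Lemma \ref{l3.6} and Lemma \ref{l3.7}, using convexity to convert the upper bound on $u_{\xi\xi}$ into a bound on $|D^2u|$. Your write-up is correct and simply spells out the details that the paper compresses into a single sentence.
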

Using it and Lemma \ref{l3.5}, the following conclusion is thus proven:
\begin{lemma}\label{l3.9}
There exists a constant $C_{5}>0$ depending on $h$, $\tilde{h}$ and $u_{0}$, $\Omega$ such that
\begin{equation*}\label{e3.25}
\sup_{\bar{\Omega}_{T},|\xi|=1}D_{ij}u\xi_{i}\xi_{j}\leq C_{5}.
\end{equation*}
\end{lemma}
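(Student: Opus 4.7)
The plan is to combine the interior second derivative estimate of Lemma \ref{l3.5} with the boundary second derivative estimate of Lemma \ref{l3.8}. Since $u$ is convex, for any unit vector $\xi$ we have $D_{ij}u\,\xi_i\xi_j \geq 0$, so bounding this quantity from above is equivalent to controlling the spectral radius $|D^2u|$, i.e.\ the operator norm of the Hessian.

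The key observation in Lemma \ref{l3.5} is that the concavity of $F$ ensures $u_{\xi\xi}$ is a subsolution of the linearized operator $\partial_t - g^{ij}\partial_{ij}$, so by the parabolic maximum principle its supremum on $\bar{\Omega}_T$ is attained either at $t=0$ (giving $\max_{\bar{\Omega}}|D^2u_0|$) or on the lateral boundary $\partial\Omega \times [0,T]$ (which is controlled by Lemma \ref{l3.8} by $C_4$). Thus for any unit vector $\xi$,
\begin{equation*}
D_{ij}u(x,t)\xi_i\xi_j \;=\; u_{\xi\xi}(x,t) \;\leq\; \max_{\partial\Omega_T} u_{\xi\xi} \;\leq\; \max_{\partial\Omega \times [0,T]}|D^2u| + \max_{\bar{\Omega}}|D^2u_0| \;\leq\; C_4 + \max_{\bar{\Omega}}|D^2u_0|.
\end{equation*}
Setting $C_5 := C_4 + \max_{\bar{\Omega}}|D^2u_0|$ and taking the supremum over $(x,t)\in \bar{\Omega}_T$ and $|\xi|=1$ yields the claim.

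There is essentially no new obstacle here: the heavy lifting was done in the boundary estimates (Lemmas \ref{l3.6} and \ref{l3.7}) that feed into Lemma \ref{l3.8}, and in the subsolution property used in Lemma \ref{l3.5}. The only point to verify is that the dependence of $C_5$ is indeed only on $h$, $\tilde{h}$, $u_0$ and $\Omega$, which is immediate since both $C_4$ and $\max_{\bar{\Omega}}|D^2u_0|$ depend only on these data.
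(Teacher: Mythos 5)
Your proof is correct and is essentially the paper's own argument: the paper proves Lemma \ref{l3.9} by exactly this combination of Lemma \ref{l3.5} (interior bound via the maximum principle, reducing to parabolic boundary data) and Lemma \ref{l3.8} (boundary second-derivative bound $C_4$), yielding $C_5 = C_4 + \max_{\bar{\Omega}}|D^2u_0|$ with the stated dependence.
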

By the Legendre transformation of $u$, using (\ref{e3.12}) and repeating the proof of the above lemmas we get
the forthcoming result:
\begin{lemma}\label{l3.10}
There exists a constant $C_{6}>0$ depending on $h$, $\Omega$, $\tilde{h}$, $\tilde{\Omega}$ and $u_{0}$, such that
\begin{equation}\label{e3.26}
\frac{1}{C_{6}}\leq\inf_{\bar{\Omega}_{T},|\xi|=1}D_{ij}u\xi_{i}\xi_{j}
\leq\sup_{\bar{\Omega}_{T},|\xi|=1}D_{ij}u\xi_{i}\xi_{j}\leq C_{6}.
\end{equation}
\end{lemma}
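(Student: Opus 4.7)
The plan is to reduce the lower Hessian bound to an upper Hessian bound for the Legendre transform $u^{*}$ of $u$, since $D^{2}u^{*}=(D^{2}u)^{-1}$ converts the two statements. The upper bound $\sup D_{ij}u\,\xi_{i}\xi_{j}\leq C_{6}$ in (\ref{e3.26}) is already contained in Lemma~\ref{l3.9}, so only the lower bound needs work.

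By (\ref{e3.12}), $u^{*}$ solves a second boundary value problem of exactly the same form as (\ref{e1.3}): the spatial nonlinearity is the same operator $F$, the domain $\tilde{\Omega}$ is strictly convex, bounded and smooth, the defining function $\tilde{h}$ is strictly concave with $|D\tilde{h}|_{\partial\tilde{\Omega}}=1$, and the initial datum $u_{0}^{*}$ is $C^{2+\alpha}$ strictly convex with $Du_{0}^{*}(\tilde{\Omega})=\Omega$. The only formal difference is the constant $-n\pi/2$ on the right-hand side of the PDE, which is annihilated by any spatial or temporal differentiation and hence plays no role in any of the arguments of Section~3. I would therefore run the proofs of Lemmas~\ref{l3.1}--\ref{l3.9} verbatim for $u^{*}$, substituting $h,\Omega,\nu$ by $\tilde{h},\tilde{\Omega},\tilde{\nu}$.

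The one step that must be checked with some care is the analog of Lemma~\ref{l3.1}. Differentiating (\ref{e3.12}) in $t$ yields $\partial_{t}\dot{u}^{*}-g^{*ij}\partial_{ij}\dot{u}^{*}=0$, and the Hopf-lemma argument using the strict obliqueness $\langle\tilde{\beta},\tilde{\nu}\rangle>0$ gives $\max_{\bar{\tilde{\Omega}}_{T}}\dot{u}^{*}=\max_{\bar{\tilde{\Omega}}}(F(D^{2}u_{0}^{*})-n\pi/2)=:\tilde{\Theta}_{0}$. It is essential that $\tilde{\Theta}_{0}<0$: strict convexity of $u_{0}$ on the compact set $\bar{\Omega}$ provides a uniform positive lower bound on the eigenvalues of $D^{2}u_{0}$, which translates into a uniform upper bound on the eigenvalues $\mu_{i}$ of $D^{2}u_{0}^{*}$, hence $\sum\arctan\mu_{i}<n\pi/2$ uniformly on $\bar{\tilde{\Omega}}$. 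This quantitative gap is precisely what the analog of Lemma~\ref{l3.2} needs: from $F(D^{2}u^{*})=\dot{u}^{*}+n\pi/2\leq n\pi/2+\tilde{\Theta}_{0}<n\pi/2$, one obtains a uniform upper bound on the smallest eigenvalue of $D^{2}u^{*}$, and thence Corollary~\ref{c3.3}'s uniform ellipticity for $g^{*ii}$.

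With uniform ellipticity in hand, Lemmas~\ref{l3.4}--\ref{l3.9} transfer to $u^{*}$ without further modification, yielding $\sup_{\bar{\tilde{\Omega}}_{T}}|D^{2}u^{*}|\leq C_{5}^{*}$. Since the eigenvalues $\mu_{i}$ of $D^{2}u^{*}$ are the reciprocals of those of $D^{2}u$, this is exactly $\inf_{\bar{\Omega}_{T},|\xi|=1}D_{ij}u\,\xi_{i}\xi_{j}\geq 1/C_{5}^{*}$. Setting $C_{6}=\max(C_{5},C_{5}^{*})$ completes (\ref{e3.26}). The main obstacle is verifying the strict gap $\tilde{\Theta}_{0}<0$; once this quantitative input is secured, the remainder of the proof is a mechanical replay of the earlier arguments with the roles of $(\Omega,h)$ and $(\tilde{\Omega},\tilde{h})$ interchanged.
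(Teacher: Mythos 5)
Your proposal is correct and is precisely the paper's own argument: pass to the Legendre transform $u^{*}$, which by (\ref{e3.12}) satisfies an equation of the same form on $\tilde\Omega$, rerun Lemmas \ref{l3.1}--\ref{l3.9} with the roles of $(\Omega,h)$ and $(\tilde\Omega,\tilde h)$ interchanged to get $\sup|D^{2}u^{*}|\leq C$, and convert this back to the lower bound on $D^{2}u$ via $D^{2}u^{*}=(D^{2}u)^{-1}$. The one subtlety you isolate --- that $\tilde\Theta_{0}<0$, which is what makes the analogs of Lemma \ref{l3.2} and Corollary \ref{c3.3} produce uniform ellipticity for $g^{*ij}$, and which traces back to the strict (not merely weak) convexity of $u_{0}$ --- is exactly the point the paper leaves implicit in its one-line ``repeating the proof of the above lemmas,'' so your fleshing-out is a genuine clarification rather than a departure.
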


\section{proof of main result}
{\bf Proof of Theorem \ref{t1.1}:}
Now let $u_{0}$  be  a  $C^{2+\alpha}$ strictly convex function as in the conditions of Theorem \ref{t1.1}.
Combining Proposition \ref{p1.1} with Lemma \ref{l3.10}, $\forall T>0$, $\exists u\in C^{2+\alpha,1+\frac{\alpha}{2}}(\bar{\Omega}_{T})$  which satisfies (\ref{e1.1}) and (\ref{e3.26}).
Using the boundary condition, we have
\begin{equation}\label{e4.1}
|Du|\leq C_{7}
\end{equation}
where $C_{7}$ be a constant depending on $\Omega$ and $\tilde{\Omega}$.
By Theorem 1.1 in \cite{TY} and Schauder estimates for parabolic equations,
for any $\hat{\Omega}\subset\subset\Omega$ and $m\in\{0,1,2,\cdots,\}$, we have
\begin{equation*}\label{e4.2}
\sup_{x_{i}\in\hat{\Omega}, t_{i}\geq 1}
\frac{|D^{2+m}u(x_{1},t_{1})-D^{2+m}u(x_{2},t_{2})|}
{\max\{|x_{1}-x_{2}|^{\alpha},|t_{1}-t_{2}|^{\frac{\alpha}{2}}\}}\leq C_{8}
\end{equation*}
where $C_{8}$ is a constant depending on the known data and $dist(\partial\Omega, \hat{\Omega})$.
By $Arzel\grave{a}-Ascoli$ theorem, a diagonal sequence argument shows that for any
$\{t_{k}\}|_{k=1}^{+\infty}$ satisfying
$$\lim t_{k}=+\infty,$$
there exist a subsequence
$$\{t_{k_{j}}\}|_{j=1}^{+\infty}\subset\{t_{k}\}|_{k=1}^{+\infty}$$
and
$$\hat{u}\in C^{1+1}(\bar{\Omega})\cap C^{2+m}(\Omega).$$ Such that for any $\zeta<1$ and $x\in \Omega$, we have
$$\lim_{j\rightarrow+\infty}\|u(\cdot,t_{k_{j}})-\hat{u}(\cdot)\|_{C^{1+\zeta}(\bar{\Omega})}=0.$$
\begin{equation}\label{e4.30}
\lim_{j\rightarrow+\infty}D^{2+m}u(x,t_{k_{j}})=D^{2+m}\hat{u}(x),
\end{equation}
and $\hat{u}$ satisfies (\ref{e3.26}).
Then we get
\begin{equation*}\label{e4.3}
h(D\hat{u})|_{\partial\Omega}=0
\end{equation*}
and
\begin{equation*}\label{e4.4}
\lim_{j\rightarrow+\infty}F(D^{2}u(x,t_{k_{j}}))=F(D^{2}\hat{u}(x)),\quad x\in \Omega.
\end{equation*}
For each $l$, differentiating the equation (\ref{e1.1}) by $x_{l}$  yields
$$\partial_{t}u_{l}=g^{ij}\partial_{ij}u_{l}.$$
Integrating from $0$ to $t$ on both sides we obtain
$$u_{l}(x,t)-u_{l}(x,0)=\int_{0}^{t}g^{ij}\partial_{ij}u_{l}(x,\sigma)d\sigma.$$
Combining it with (\ref{e4.1}), (\ref{e4.30}), we have
\begin{equation*}\label{e4.5}
\lim_{t\rightarrow+\infty}g^{ij}\partial_{ij}u_{l}(x,t)=0,\quad x\in \Omega.
\end{equation*}
Using this fact along with (\ref{e4.30}), the following equation emerges:
\begin{equation*}\label{e4.6}
g^{ij}\partial_{ij}\hat{u}_{l}=0,\quad x\in \Omega,\quad l\in\{1,2,\cdots,n\}.
\end{equation*}
Specifically , it is claimed that
$$F(D^{2}\hat{u})=C_{9}, \quad x\in \Omega$$
for some constant $C_{9}$ and it follows from (\ref{e3.26}) that  $C_{9}>0$.
Then the claim of Theorem \ref{t1.1} follows from the above arguments. \qed

\vspace{5mm}
{\bf Acknowledgment:}The author was supported by NNSF of China (Grant
No. 11261008) and NNSF of Guangxi (Grant No. 2012GXNSFBA053009) and
 was very grateful to  Institute of Differential Geometry at Leibniz University
Hannover for the kind hospitality. The author would like to thank the referee for giving some valuable suggestions which
improved the paper.

\end{document}